\definecolor{myurlcolor}{rgb}{0,0,0.4}
\definecolor{mycitecolor}{rgb}{0,0.5,0}
\definecolor{myrefcolor}{rgb}{0.5,0,0}
\numberwithin{equation}{section}
\theoremstyle{plain}
\newtheorem{dummy}{}[subsection]
\newtheorem{thm}[dummy]{Theorem}
\newtheorem{lemma}[dummy]{Lemma}
\newtheorem{prop}[dummy]{Proposition}
\newtheorem{cor}[dummy]{Corollary}
\newtheorem{defn}[dummy]{Definition}
\theoremstyle{definition}
\newtheorem{remark}[dummy]{Remark}
\newtheorem{ex}[dummy]{Example}
\Crefname{ex}{Example}{Examples}	
\Crefname{thm}{Theorem}{Theorems}
\Crefname{prop}{Proposition}{Propositions}
\Crefname{cor}{Corollary}{Corollaries}
\Crefname{defn}{Definition}{Definitions}
\newcommand{\N}{\mathbb{N}}
\newcommand{\R}{\mathbb{R}}
\newcommand{\cat}[1]{{\mathsf{#1}}} 
\newcommand{\ar}[2][]{\arrow{#2}{#1}}
\newcommand{\uni}[2][]{\arrow[dashrightarrow]{#2}{#1}} 
\newcommand{\id}[1][]{\ifthenelse{\equal{#1}{}}{\mathrm{id}}{\mathrm{id}_{#1}}} 
\newcommand{\op}{\mathrm{op}}
\DeclareMathOperator{\Barc}{Bar}	
\newcommand{\barc}[2]{\Barc_{#1}(#2)}	
\newcommand{\scdots}[2][]{\mathinner{#1\overset{#2}{\cdots}#1}}
\newcommand{\Set}{\cat{Set}}
\let\originalleft\left
\let\originalright\right
\renewcommand{\left}{\mathopen{}\mathclose\bgroup\originalleft}
\renewcommand{\right}{\aftergroup\egroup\originalright}
\tikzset{%
    bullet/.style={
       fill=black,
       circle,
       minimum width=1pt,
       inner sep=1pt
     },
     relation/.style={
       -,
       thick,
       shorten <=2pt,
       shorten >=2pt
     },
     function/.style={
       ->,
       thick,
       shorten <=2pt,
       shorten >=2pt
     },
     every fit/.style={
       ellipse,
       draw,
       inner sep=0pt
     }
}
\setlist[enumerate]{label=(\alph*),itemsep=5pt,topsep=8pt}
\setlist[itemize]{label=$\triangleright$,itemsep=5pt,topsep=6pt}
\Crefname{equation}{}{}		
\author[C.~Constantin]{Carmen Constantin}
\address{Mansfield College, Oxford, UK}
\email{carmen.constantin@mansfield.ox.ac.uk}
\author[T.~Fritz]{Tobias Fritz}
\address{Department of Mathematics, University of Innsbruck, Austria}
\email{tobias.fritz@uibk.ac.at}
\author[P.~Perrone]{Paolo Perrone}
\address{Department of Computer Science, University of Oxford, United Kingdom}
\email{paolo.perrone@cs.ox.ac.uk}
\author[B.~Shapiro]{Brandon T. Shapiro}
\address{Topos Institute, Berkeley CA, U.S.A}
\email{shapiro@topos.institute}
\title[Partial evaluations and the bar construction]{Partial evaluations and the compositional structure of the bar construction \vspace{3pt}}
\begin{document}

\begin{abstract}
	\vspace{3pt}

	The algebraic expression $3 + 2 + 6$ can be evaluated to $11$, but it can also be \emph{partially evaluated} to $5 + 6$. In categorical algebra, such partial evaluations can be defined in terms of the $1$-skeleton of the bar construction for algebras of a monad. We show that this partial evaluation relation can be seen as the relation internal to the category of algebras generated by relating a formal expression to its total evaluation. The relation is transitive for many monads which describe commonly encountered algebraic structures, and more generally for BC monads on $\Set$ (which are those monads for which the underlying functor and the multiplication are weakly cartesian). We find that this is not true for all monads: we describe a finitary monad on $\Set$ for which the partial evaluation relation on the terminal algebra is not transitive. 
	
	With the perspective of higher algebraic rewriting in mind, we then investigate the compositional structure of the bar construction in all dimensions. We show that for algebras of BC monads, the bar construction has fillers for all \emph{directed acyclic configurations} in $\Delta^n$, but generally not all inner horns. 	
%
\end{abstract}

\newgeometry{top=0.2cm,bottom=1cm}
\maketitle
\thispagestyle{empty}
\setcounter{tocdepth}{1}
\tableofcontents
\vspace{-1cm}

\restoregeometry

\newpage

\section{Introduction}

In this paper we study compositional and combinatorial aspects of the
bar construction for algebras of several types of monads,
motivated by the idea that edges in the bar construction can be interpreted as \emph{partial evaluations} of formal algebraic expressions \cite{FP}. 
This partly involves the classes of simplicial sets introduced by us in the companion paper~\cite{delta_squares}.

\subsection*{Partial evaluations and the bar construction}

In more detail, the \emph{bar construction} associates to every Eilenberg--Moore algebra a simplicial object in the category of algebras, playing the role of a universal resolution of the algebra~\cite{trimble}. For the case of monads on $\Set$, the resulting simplicial set can be interpreted operationally in terms of \emph{partial evaluations}. Just as one can say that the formal sum $2+3+4$ can be evaluated to $9$, one can also say that it can be \emph{partially evaluated} to $5+4$. A way to make this precise is by using the 1-dimensional structure of the bar construction of $\N$ as an algebra of a suitable monad, namely the monad of commutative monoids (see \Cref{trees_and_boxes}), in which such a partial evaluation is represented by an edge (1-simplex).
More generally, the bar construction can be seen as a simplicial set where the 0-simplices are formal expressions specified by the monad, the 1-simplices are partial evaluations between two such formal expressions, and the higher-dimensional simplices have to do with higher substitutions; for example, we will interpret the 2-simplices as composition rules for partial evaluations. 
The relation induced by the existence of partial evaluations can be seen as the relation internal to the category of algebras generated by linking a formal expression to its result. Partial evaluations themselves can be seen them as a categorification (or a \emph{proof-relevant version}) of this idea.

We will present these ideas in more detail in \Cref{secpev,trees_and_boxes}. We also refer to our earlier work \cite{FP}, and note that an example of a partial evaluation in the context of the bar construction had already appeared earlier in notes by Baez on cohomology and computation~\cite{baez_notes}.

\subsection*{Compositional structure of partial evaluations}

Whenever the monad under consideration is cartesian, which happens for example whenever it is presented by a non-symmetric operad, the bar construction of every algebra is known to be the nerve of a category (\Cref{cartesian_nerve}).
This means, in particular, that partial evaluations can be composed uniquely, and that their composition operation is strictly unital and associative. A large class of monads appearing in algebra, as well as in probability and other fields, are however only \emph{weakly cartesian}, or have even less rigid properties such as the property BC (see \Cref{secmonads}). Examples of weakly cartesian monads on $\Set$ are all those monads which are presented by a symmetric operad, such as the monad of commutative monoids. 

For the algebras of BC monads, the bar construction generally satisfies weaker filling conditions than the nerve of a category, making the resulting composition operation no longer well defined. As we will see, the bar construction of these monads is generally not even a quasicategory, as not all inner horns admit a filler above dimension 2. It nevertheless satisfies filler conditions reminiscent of a compositional structure;
in the companion paper~\cite{delta_squares}, we have introduced \emph{inner span complete} simplicial sets with the current application in mind: we prove that bar constructions of BC monads are inner span complete simplicial sets. 
This has powerful consequences for their compositional structures, since \cite[Theorem~5.14]{delta_squares} implies the existence of a large class of fillers for inner span complete simplicial set.

For example, the convex-combination monad---also known as the distribution monad in probability terms---is BC, and therefore its algebras have inner span complete bar constructions. Building on the relation to second-order stochastic dominance developed in~\cite{FP}, it is natural to wonder whether this inner span completeness also has significance for probability theory,
a question which we do not yet have an answer for.

\subsection*{Outline}

\begin{itemize}
	\item In \Cref{secmonads} we provide some background on monads and some standard definitions, such as that of a weak pullback. We also outline the relevant weak exactness conditions on monads (\Cref{liftmonads}) and their algebras (\Cref{liftalgebras}) that we use in the rest of the paper. Some of these conditions are standard, while some appear for the first time here (to our knowledge).
 
	\item In \Cref{secpev} we recall the concept of partial evaluations from~\cite{FP}, which can be seen as an operational interpretation of the bar construction in low dimension, and study its compositional properties further. In particular, we define the partial evaluation relation and show that it is the smallest relation internal to the category of algebras which relates a formal expression to its total evaluation (\Cref{pe_internal}), and then we proceed to give some criteria for when and how partial evaluations can be composed (\Cref{composition}) and for when they can be reversed  (\Cref{reversing,irreversibility}). 
 
	\item In \Cref{secbar} we formally introduce the bar construction (\Cref{barconstruction}) and begin our study of its compositional properties. We use the commutative-monoid monad to give counterexamples to several natural hypotheses, including the general
		non-uniqueness of composites (\Cref{nonuniqueness}) and the 
		nonexistence of fillers for inner horns (\Cref{no_horn_fillers}). This prepares the ground for the last part of the paper, in which we give a number of compositional properties which \emph{do} hold for the bar constructions of various classes of monads which include the commutative-monoid monad.

	\item In \Cref{seccompositional}, we recall the \emph{inner span complete} simplicial sets from~\cite{delta_squares} and discuss some of their basic properties. We then show that the bar construction of an algebra is:
		\begin{itemize}
			\item inner span complete for a BC monad (\Cref{BC_to_ISpC});
			\item inner span complete and stiff for a weakly cartesian monad (\Cref{wc2ic});
			\item inner span complete and split for a weakly cartesian and strictly positive monad (\Cref{pwc2pc}).
		\end{itemize}

\end{itemize}


\subsection*{Relevant background} We assume familiarity with the theory of monads, their algebras, and the basic idea of how to do categorical algebra in terms of finitary monads on $\Set$. We also assume familiarity with simplicial sets, but provide a brief recap next in the context of setting up notation. Some parts also assume familiarity with the basic definitions of quasicategory theory~\cite{joyal}.

\subsection*{Notation and terminology}

Throughout the paper, $\Delta$ denotes the simplex category, i.e.~the category of nonempty finite ordinals
\[
	{[n]} \coloneqq \{0,\ldots,n\}	
\]
for $n \in \N$ as objects and monotone maps as morphisms. Similarly, $\Delta_+$ denotes the augmented simplex category, i.e.~the category of finite ordinals and monotone maps, where we also include the empty ordinal ${[-1]} \coloneqq \emptyset$. In either case, its \emph{generating coface maps} are the morphisms
\[
	d^{n,i} \: : \: {[n-1]} \longrightarrow {[n]}
\]
for $i = 0,\ldots,n$, given by the inclusion of ${[n-1]}$ into ${[n]}$ omitting the element $i$. The \emph{generating codegeneracy maps} are likewise the morphisms
\[
	s^{n,i} \: : \: {[n+1]} \longrightarrow {[n]}
\]
for $i = 0,\ldots,n$, given by the map which hits $i$ twice but otherwise acts like the identity. A coface map or codegeneracy map in general is a composite of generating ones. 

A simplicial set is then a functor $\Delta^\op \to \Set$, and an augmented simplicial set is a functor $\Delta_+^\op \to \Set$. As usual, when the simplicial set under consideration is clear from the context, then we denote the face and degeneracy maps (the functor's action on coface and codegeneracy maps) using subscripts, $d_{n,i}$ and $s_{n,i}$, or merely $d_i$ and $s_i$.

We generally specify a finitary monad on $\Set$ in terms of the algebraic theory that it presents. For example, the \emph{commutative-monoid monad} will be used throughout the paper for illustration.

\subsection*{Acknowledgements}

We first of all thank Joachim Kock and an anonymous referee for detailed comments on an earlier version, which have resulted in various improvements to the exposition.

This paper originates from the \emph{Applied Category Theory 2019} school. We thank the organizers Daniel Cicala and Jules Hedges for having made it happen, the Computer Science Department of the University of Oxford for hosting the event, as well as all other participants of the school for the interesting discussions and insights, especially Martin Lundfall. 

Research for the third author was partly funded by the Fields Institute (Canada), and by the AFOSR grants FA9550-19-1-0113 and FA9550-17-1-0058 (U.S.A.). The fourth author is supported by the Department of Defense (DoD) through the National Defense Science and Engineering Graduate (NDSEG) Fellowship Program.

\section{Preliminaries on monads}\label{secmonads}

In this section we list some conditions on monads and on their algebras, which in the rest of this work will be both applied and given an operational interpretation in terms of partial evaluations.
Some of these conditions are known in the literature (see the given references), and some are introduced here for the first time.

\subsection{(Weakly) cartesian squares}\label{weakly_cartesian}

We start with some preliminary considerations on weak pullbacks.

\begin{defn}\label{weak pullback}
 (\cite{JoyalAnalytic}) A diagram 
 \begin{equation}\label{square}
 \begin{tikzcd}
	A \ar{r}{f} \ar{d}[swap]{g}  & B \ar{d}[swap]{m} \\
	C \ar{r}{n} & D
 \end{tikzcd}
 \end{equation}
 in a category $\cat{C}$ is called a \emph{weak pullback}, or \emph{weakly cartesian square}, if for every object $S$ and every commutative diagram
 $$
 \begin{tikzcd}[sep=small]
	S \ar{drrr}{p} \ar{dddr}[swap]{q} \\
	& && B \ar{dd}[swap]{m} \\ \\
	& C \ar{rr}{n} && D
 \end{tikzcd}
 $$
 in $\cat{C}$ there exists an arrow $S\to A$ making the following diagram commute. 
 $$
 \begin{tikzcd}[sep=small]
	S \ar{drrr}{p} \ar{dr} \ar{dddr}[swap]{q} \\
	& A \ar{rr}[swap]{f} \ar{dd}{g}  && B \ar{dd}[swap]{m} \\ \\
	& C \ar{rr}{n} && D
 \end{tikzcd}
 $$
\end{defn}

If we are in the category $\cat{Set}$, then the diagram~\Cref{square} is a weak pullback if and only if for every $b\in B$ and $c\in C$ with $m(b)=n(c)$ there exists $a\in A$ such that $f(a)=b$ and $g(a)=c$.  Note that if we moreover require the map $S\to A$ to be unique, then we get the ordinary notion of pullback (or cartesian square). We sometimes also say \emph{strong} pullback to emphasize the distinction with weak pullbacks.

Strong pullbacks satisfy the following standard \emph{pullback lemma}, also known as the \emph{prism lemma} in its homotopical version (see for instance \cite[Lemma 1.11]{GKT1} in the homotopical version).

\begin{lemma}\label{pullback_lemma}
In any diagram as below, if the right square and outer rectangle are strong pullbacks, then so is the left square.
$$
\begin{tikzcd}
	\cdot \ar{r} \ar{d} & \cdot \ar{r} \ar{d} & \cdot \ar{d} \\
	\cdot \ar{r} & \cdot \ar{r} & \cdot
\end{tikzcd}
$$
\end{lemma}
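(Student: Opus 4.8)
\section*{Proof proposal for \Cref{pullback_lemma}}

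The plan is to verify the universal property of the left square directly, by a diagram chase. Write the diagram as
\[
\begin{tikzcd}
A \arrow[r, "f"] \arrow[d, "\alpha"'] & B \arrow[r, "g"] \arrow[d, "\beta"'] & C \arrow[d, "\gamma"] \\
A' \arrow[r, "f'"] & B' \arrow[r, "g'"] & C'
\end{tikzcd}
\]
so that by hypothesis the right square and the outer rectangle are strong pullbacks, and we must show that the left square is one as well. Since the left square is claimed to be a \emph{strong} pullback, this splits into two tasks: the weak pullback property (existence of a mediating morphism) and the uniqueness of that morphism.

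For existence, let $S$ be an object equipped with $p \colon S \to B$ and $q \colon S \to A'$ satisfying $\beta p = f' q$. First I would feed the pair $(g p \colon S \to C,\ q \colon S \to A')$ into the universal property of the \emph{outer rectangle}: this pair is compatible, since $\gamma g p = g' \beta p = g' f' q$, so there is a unique $h \colon S \to A$ with $\alpha h = q$ and $g f h = g p$. It then remains to check that this $h$ also satisfies $f h = p$, and here is the one step where it matters that the \emph{right square} is strong rather than merely weak: the two maps $f h,\, p \colon S \to B$ have equal composite with $g$ (both equal $g p$) and equal composite with $\beta$ (namely $\beta f h = f' \alpha h = f' q = \beta p$), so since $B$ is a strong pullback they must agree. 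Thus $h$ witnesses the weak pullback property of the left square.

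For uniqueness, suppose $h, h' \colon S \to A$ both satisfy $f h = p = f h'$ and $\alpha h = q = \alpha h'$. Then $g f h = g p = g f h'$ while $\alpha h = \alpha h'$, so $h$ and $h'$ are both mediating morphisms for the cone $(g p, q)$ over the cospan that exhibits the outer rectangle; uniqueness of that mediating morphism forces $h = h'$, which finishes the argument.

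I do not expect a real obstacle here: the whole proof is a routine chase. The only thing requiring care is to invoke the correct pullback at each step --- the outer rectangle for the construction and for the uniqueness of $h$ as a map into $A$, and the right square (crucially, as a strong and not merely weak pullback) to conclude that this $h$ genuinely factors $p$ through $f$. One could alternatively apply $\Hom(S,-)$ to reduce the statement to the classical elementwise pullback lemma in $\Set$, but that is no shorter than the direct chase.
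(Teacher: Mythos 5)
Your diagram chase is correct: the existence step correctly uses the outer rectangle to produce $h$ and then the \emph{strong} right square to force $fh = p$, and the uniqueness step correctly reduces to uniqueness of the mediating morphism into the outer pullback. The paper itself states this lemma as standard and omits the proof entirely, so your argument is exactly the expected standard one and there is nothing to compare against.
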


A fundamental difference between strong and weak pullbacks is that this does not hold for weak pullbacks.

\begin{ex}
Consider the diagram below in $\Set$:
$$
\begin{tikzcd}
	\{*\} \ar{drr}{b} \ar[equals]{ddr} \\
	& \{*\} \ar{r}[near start]{a} \ar[equals]{d} & \{a,b\} \ar{r} \ar{d} & \{*\} \ar[equals]{d} \\
	& \{*\} \ar[equals]{r} & \{*\} \ar[equals]{r} & \{*\}
\end{tikzcd}
$$

Both the right square and the outer rectangle are weak pullbacks, and the kite shaped subdiagram commutes, but there is no map $h : \{*\} \to \{*\}$ with $ah=b$. The left square is therefore not a weak pullback.
\end{ex}

The following lemma will be particularly useful when $f$ or $g$ is a degeneracy map of a simplicial set, which is always (split) monic.

\begin{lemma}\label{mono_strong_pullback}
If the square below is a weak pullback in any category and $f$ or $g$ is monic, then the square is a strong pullback.
$$
 \begin{tikzcd}
	A \ar{r}{f} \ar{d}[swap]{g}  & B \ar{d}[swap]{m} \\
	C \ar{r}{n} & D
 \end{tikzcd}
$$
\end{lemma}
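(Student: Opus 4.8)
The plan is simply to unwind the definitions. A strong pullback is precisely a weak pullback in which the mediating map $S \to A$ is moreover \emph{unique}. Since the square is assumed to be a weak pullback, the existence half of the universal property is already in hand, so the only thing left to check is uniqueness of the mediating map, and this is exactly where the monicity hypothesis enters.

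Concretely, I would fix an object $S$ together with maps $p \colon S \to B$ and $q \colon S \to C$ satisfying $mp = nq$. By the weak pullback property there is some $h \colon S \to A$ with $fh = p$ and $gh = q$. Suppose $h' \colon S \to A$ is another such map, so that $fh' = p$ and $gh' = q$. Then $fh = fh'$ and $gh = gh'$. If $f$ is monic we cancel $f$ on the left to conclude $h = h'$; if instead $g$ is monic we cancel $g$ on the left. Either way the mediating map is unique, and hence the square is a strong pullback.

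The argument is essentially immediate, so there is no real obstacle; the only point requiring a little care is to state uniqueness directly in terms of an arbitrary competing cone $(p,q)$ and its two fillers $h, h'$, rather than comparing a filler against the one handed down by the weak pullback property (which need not be chosen canonically). All the content of the lemma is thus the definition of weak pullback together with the elementary fact that a monomorphism is left-cancellable, which holds in any category.
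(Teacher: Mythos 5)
Your proof is correct and matches the paper's argument exactly: existence of the mediating map comes from the weak pullback hypothesis, and uniqueness follows by left-cancelling the monomorphism $f$ (or $g$) against any two competing fillers. Nothing further is needed.
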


\begin{proof}
Assume $f$ is monic (the argument for $g$ is analogous), and let $p : S \to B$, $q : S \to C$ be maps that commute over $D$. Any two induced maps $h,h' : S \to A$ with $fh=fh'=p$ are equal as $f$ is monic.
\end{proof}

\begin{defn}[{e.g.~\cite[Appendix, Definition 4]{JoyalAnalytic}}] Let $F:\cat{C}\to\cat{D}$ be a functor. We call $F$ \emph{cartesian} if it preserves pullbacks, and \emph{weakly cartesian} if it preserves weak pullbacks.
\end{defn}

Examples of weakly cartesian functors include Joyal's analytic functors (\cite[Appendix, Theorem 1]{JoyalAnalytic}). If $\cat{C}$ has pullbacks, then $F:\cat{C}\to\cat{D}$ is weakly cartesian equivalently if it sends pullbacks to weak pullbacks.

\begin{defn}[{e.g.~\cite[Definition 2]{JoyalAnalytic}}] Let $F,G:\cat{C}\to\cat{D}$ be functors. A natural transformation $\alpha:F\Rightarrow G$ is called \emph{cartesian} (resp.~\emph{weakly cartesian}) if for every morphism $f:X\to Y$ of $\cat{C}$, the naturality square
 $$
 \begin{tikzcd}
	FX \ar{r}{Ff} \ar{d}[swap]{\alpha_X} & FY \ar{d}[swap]{\alpha_Y} \\
	GX \ar{r}{Gf} & GY
 \end{tikzcd}
 $$
 is cartesian (resp.~weakly cartesian).
\end{defn}

\subsection{Lifting conditions for monads}\label{liftmonads}

We now define the various properties of monads relating to (weakly) cartesian squares which we use throughout the paper. The interested reader can find more details in \cite{weakweber} and \cite{CHJ}.

\begin{defn}
A monad $(T,\eta,\mu)$ is called \emph{BC}  if $T$ preserves weak pullbacks and the multiplication $\mu$ is weakly cartesian.
\end{defn}

``BC'' stands for ``Beck--Chevalley'', and follows the terminology of \cite{CHJ}. As we will see, the BC property, and in particular weak cartesianness of $\mu$, is closely related to the problem of composing partial evaluations (see \Cref{composition} for the details).

\begin{ex}\label{distBC}
The convex-combination monad or distribution monad,\footnote{See e.g.~\cite[Section~6.2]{FP} for the detailed definition.} usually denoted by $D$, is BC. It is known that the multiplication transformation is weakly cartesian (\cite[Proposition~6.4]{FP}). 
To show moreover that the functor $D$ preserves weak pullbacks, we will use a construction sometimes known as \emph{conditional product}.\footnote{We refer to Simpson's~\cite[Section~6]{simpson_independence} for a categorical treatment which is especially close to what we use here.} Let 
$$
 \begin{tikzcd}
	A \ar{r}{f} \ar{d}[swap]{g}  & B \ar{d}[swap]{m} \\
	C \ar{r}{n} & E
 \end{tikzcd}
$$
be a (strong) pullback in $\cat{Set}$. In particular, we have 
\begin{equation}\label{fiberprod}
A \cong \coprod_{e\in E} m^{-1}(e) \times n^{-1}(e) .
\end{equation}
Consider its $D$-image,
$$
 \begin{tikzcd}
	DA \ar{r}{Df} \ar{d}[swap]{Dg}  & DB \ar{d}[swap]{Dm} \\
	DC \ar{r}{Dn} & DE .
 \end{tikzcd}
$$
Now let $p\in DB$ and $q\in DC$ be finitely supported distributions, and suppose that $Dm(p)=Dn(q)$, i.e.~that for all $e\in E$,
$$
\sum_{b\in m^{-1}(e)} p(b) = \sum_{c\in n^{-1}(e)} q(c) .
$$
Denote by $r\in DE$ the resulting distribution on $E$. Now define the distribution $s\in DA$ as follows. Using \Cref{fiberprod}, we can write every element of $A$ as a pair $(b,c)$, with $b\in B$ and $c\in C$ such that $m(b)=n(c)$. Now, for each such $(b,c)$, let $e\coloneqq m(b)=n(c)$, and set
$$
s(b,c) \coloneqq \begin{cases}
		\dfrac{p(b) \cdot q(c)}{r(e)}	& \textrm{if } r(e) > 0,	\\
		0				& \textrm{otherwise}.
		\end{cases}
$$
It is straightforward to verify that this satisfies the relevant normalization condition $\sum_{(b,c) \in A} s(b,c) = 1$ to qualify as a probability distribution. We then have that for each $b\in B$,
\begin{align*}
Df(s)(b) &= \sum_{(b,c)\in f^{-1}(b)} s(b,c) = \sum_{c\in C  \mbox{\tiny{ s.t.~}} m(b)=n(c)} \dfrac{p(b) \cdot q(c)}{r(e)} \\
& = p(b) \, \dfrac{\sum_{c\in n^{-1}(e)} q(c)}{\sum_{c\in n^{-1}(e)} q(c)} = p(b) ,
\end{align*}
and analogously $Dg(s)(c)=q(c)$. Hence $D$ is indeed a weakly cartesian functor.
\end{ex}

\begin{defn}
	A monad $(T,\eta,\mu)$ is called \emph{cartesian} if $T$ preserves pullbacks and both $\eta$ and $\mu$ are cartesian.
	It is called \emph{weakly cartesian} if $T$ preserves weak pullbacks and both $\eta$ and $\mu$ are weakly cartesian.
\end{defn}

In other words, a weakly cartesian monad is a BC monad for which also the unit $\eta$ is weakly cartesian. Taking into account that the components of $\eta$ are typically monomorphisms, \Cref{mono_strong_pullback} then shows that the naturality squares of $\eta$ are strongly cartesian whenever this holds.\footnote{For example, the terminal monad $T$, for which $TX$ is singleton for every set $X$, does not have monomorphisms components for $\eta$.}

\begin{ex}
	\label{operads_etc}
	The monad of monoids is cartesian, and more generally, every monad arising from a non-symmetric operad is cartesian. This was first understood by Weber, see \cite[Proposition~2.6 and Example~2.7.1]{weakweber}, where these ideas are presented in a somewhat different language. Explanations in terms of more similar concepts to the ones presented in this work are	\cite[Observation~2.1(d)]{CHJ} for the monoid monad case, and \cite[Section~C.1]{leinster} for the general statement. 
\end{ex}

\begin{ex}
    \label{sym_operads_etc}
	Similarly, the monad of commutative monoids is weakly cartesian, and more generally, every monad arising from a symmetric operad is weakly cartesian. 
	In general, these monads are not cartesian. To illustrate why, let's consider the example of the commutative-monoid monad. 
    Consider a noninjective function between sets $f:X\to Y$, and form the the naturality square with $\mu$. 
    \[
     \begin{tikzcd}
      TTX \ar{r}{TTf} \ar{d}{\mu} & TTY \ar{d}{\mu} \\
      TX \ar{r}{Tf} & TY
     \end{tikzcd}
    \]
    Now let $x_0,x_1,x_2\in X$ be such that $f(x_0) \neq f(x_1)=f(x_2)$. Denote $y_0 \coloneqq f(x_0)$ and $y \coloneqq f(x_1)=f(x_2)$.
    Consider now the elements\footnote{The boxes denote levels of formality corresponding to applications of $T$, a notation that will be explained in detail in \Cref{trees_and_boxes}.}
    \[
	    t \coloneqq \boxed{x_0}+\boxed{x_1}+\boxed{x_2}\in TX, \qquad \sigma \coloneqq \boxed{\boxed{y_0}+\boxed{y}}+\boxed{\boxed{y}}\in TTY.
    \]
    As one can directly compute, these terms are mapped to
    \[
	    (Tf)(t)=\mu(\sigma)=\boxed{y_0}+\boxed{y}+\boxed{y} \in TY
    \]
    in the lower right corner of the diagram.
    Now, since we can rearrange the terms \emph{within} the boxes, but not \emph{between} the boxes, the following elements of $TTX$ are distinct.
    \[
     \alpha \coloneqq \boxed{\boxed{x_0}+\boxed{x_1}}+\boxed{\boxed{x_2}} , \qquad 
     \beta \coloneqq \boxed{\boxed{x_0}+\boxed{x_2}}+\boxed{\boxed{x_1}}.
    \]
    However, for both elements, we have that 
    \[
     \mu(\alpha) = t = \mu(\beta) , \qquad TTf(\alpha) = \sigma = TTf(\beta) .
    \]
    Therefore the naturality diagram above can only be a weak pullback, not a pullback. 

    Again, this phenomenon was first understood by Weber \cite[Section~11 and Example~2.7.5]{weakweber}, and a treatment more similar in language to this work is given in \cite[Example~8.2]{CHJ} and~\cite{analyticmonads}. Further examples and nonexamples of weakly cartesian monads can be found again in \cite{CHJ}.
\end{ex}

\begin{ex}
	\label{distribution_monad_not_wc}
	Although it is BC, the distribution monad $D$ is not weakly cartesian since its unit $\eta$ is not weakly cartesian.
	Indeed consider the naturality square for the unique map $X \to \{\ast\}$ with any set $X$.
	\[
		\begin{tikzcd}
			X \ar{d}{\eta} \ar{r}	& \{\ast\} \ar{d}{\eta}	\\
			DX \ar{r}		& D\{\ast\}
		\end{tikzcd}
	\]
	Since $D\{\ast\}$ is again a one-element set, the right arrow is an isomorphism.
\end{ex}

While the definitions above have previously appeared in the literature, the following are new (as far as we know).

\begin{defn}
	\label{strictly_pos}
 A monad $T$ is called \emph{strictly positive} if the square 
 \begin{equation}\label{possquare}
 \begin{tikzcd}
	X \ar[equals]{d}{} \ar{r}{\eta\eta} & TTX \ar{d}[swap]{\mu} \\
	X \ar{r}{\eta} & TX. 
 \end{tikzcd}
 \end{equation}
 is a pullback for all $X$.
\end{defn}

Since the left vertical map is an identity, the diagram above is a pullback if and only if it is a weak pullback.

For a cartesian monad, we next show that it is enough to check this condition on the terminal set $X = 1 = \{\ast\}$, so that it is strictly positive if and only if $\eta\eta(\ast)$ is the only element of $TT1$ which multiplies to $\eta(1)$.

\begin{prop}
 \label{cartesian_positive}
 Let $(T,\eta,\mu)$ be a monad on $\cat{Set}$ such that $\eta$ or $\mu$ is cartesian. 
 Then the square \Cref{possquare} is a pullback for all sets $X$ if and only if it is for $X=1$.
\end{prop}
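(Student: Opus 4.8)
The plan is to reduce the condition for an arbitrary set $X$ to the condition for $X = 1$ by pushing forward along the unique map $!\colon X \to 1$ and then invoking cartesianness of $\eta$ or of $\mu$. The ``only if'' direction is trivial, since $X = 1$ is an instance of ``all $X$''.

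For the ``if'' direction, fix a set $X$ and first unwind what it means for \eqref{possquare} to be a pullback. The left vertical map is $\id[X]$, in particular monic, so by \Cref{mono_strong_pullback} weak and strong pullback coincide here; moreover the comparison map $X \to TTX \times_{TX} X$, $x \mapsto (\eta_{TX}\eta_X(x), x)$, is automatically injective via the second projection. Hence the square is a pullback exactly when this map is surjective, i.e.\ when every $a \in TTX$ and $c \in X$ with $\mu_X(a) = \eta_X(c)$ satisfy $a = \eta_{TX}\eta_X(c)$. So I would take such $a$ and $c$ and aim to prove that identity.

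Applying $TT!$ to $a$ and using naturality of $\mu$ and then of $\eta$ gives $\mu_1\big(TT!(a)\big) = T!\big(\mu_X(a)\big) = T!\big(\eta_X(c)\big) = \eta_1(\ast)$. The hypothesis for $X = 1$ then forces $TT!(a) = \eta_{T1}\eta_1(\ast)$, since $\eta_{T1}\eta_1(\ast)$ is the only element of $TT1$ that $\mu_1$ sends to $\eta_1(\ast)$. It remains to transport this back to $TTX$, and the two hypotheses are used slightly differently. If $\mu$ is cartesian, the naturality square of $\mu$ at $!$ is a strong pullback, so an element of $TTX$ is determined by its images under $\mu_X$ and $TT!$; now both $a$ and $\eta_{TX}\eta_X(c)$ map to $\eta_X(c)$ under $\mu_X$ (using $\mu_X\eta_{TX} = \id[TX]$ for the latter) and to $\eta_{T1}\eta_1(\ast)$ under $TT!$ (by the computation above together with naturality), so they coincide. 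If instead $\eta$ is cartesian, the naturality squares of $\eta$ at $T!\colon TX \to T1$ and at $!\colon X \to 1$ are strong pullbacks; the first, applied to the fact that $TT!(a) = \eta_{T1}\eta_1(\ast)$ lies in the image of $\eta_{T1}$, yields $w \in TX$ with $\eta_{TX}(w) = a$ and $T!(w) = \eta_1(\ast)$, and the second yields $x \in X$ with $\eta_X(x) = w$; then $\eta_X(x) = \mu_X(\eta_{TX}\eta_X(x)) = \mu_X(a) = \eta_X(c)$, whence $a = \eta_{TX}\eta_X(x) = \eta_{TX}\eta_X(c)$.

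The step I expect to need the most care is the very last one in each case: it is not enough to learn that $a$ lies in the image of $\eta_{TX}\eta_X$; one must pin down \emph{which} preimage it is, which is exactly what the uniqueness in a strong pullback (in the $\mu$ case), or the extra constraint $\mu_X(a) = \eta_X(c)$ together with the monad unit law (in the $\eta$ case), provides. A slightly cleaner packaging for the $\mu$-cartesian case is available via \Cref{pullback_lemma}: exhibit \eqref{possquare} for $X$ as the left square of a horizontal rectangle whose right square is the naturality square of $\mu$ at $!$ (a pullback since $\mu$ is cartesian) and whose outer rectangle factors, using naturality of $\eta$, through \eqref{possquare} for $X = 1$, which is a pullback by hypothesis.
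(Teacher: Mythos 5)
Your argument is correct. It takes a more concrete route than the paper's: you characterize the pullback condition in \Cref{possquare} element-wise (every $a \in TTX$ with $\mu_X(a) = \eta_X(c)$ must equal $\eta_{TX}\eta_X(c)$), push $a$ forward along $TT!$ to reduce to the $X = 1$ instance, and then lift the conclusion back to $TTX$ using the uniqueness clause of the strong naturality square of $\mu$ at $!$ (in the $\mu$-cartesian case) or the existence clause of the strong naturality squares of $\eta$ at $T!$ and at $!$ (in the $\eta$-cartesian case), finishing with the unit law $\mu\circ\eta_T = \id$. All of these steps check out, including the subtle final identification of \emph{which} preimage $a$ is, which you correctly flag as the delicate point. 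The paper instead argues purely diagrammatically: it pastes \Cref{possquare} for general $X$ out of \Cref{possquare} for $X = 1$ together with two naturality trapezoids over $u : X \to 1$ --- those of $\mu$ when $\mu$ is cartesian, or those of $\eta$ (and the trivial square on $u$) when $\eta$ is cartesian --- and concludes by composition and cancellation of pullbacks. The two proofs encode the same computation; yours makes the element bookkeeping explicit (which is what a pullback in $\Set$ amounts to anyway), while the paper's is shorter and would transport verbatim to any category with the relevant pullbacks. Your closing remark about repackaging the $\mu$-cartesian case via \Cref{pullback_lemma} is essentially the paper's argument for that case.
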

\begin{proof}
	The ``only if'' direction is trivial. For the ``if'' direction, suppose that the square \Cref{possquare} is a pullback for $X=1$. Let $X$ be any set, and denote by $u:X\to 1$ the unique map. We can enlarge \Cref{possquare} to the following diagram.
\begin{equation}
 \label{double_eta}
 \begin{tikzcd}
	X \ar{dr}{u} \ar[equals]{ddd} \ar{rrr}{\eta\eta} &&& TTX \ar{dl}[swap]{TTu} \ar{ddd}[swap]{\mu} \\
	& 1 \ar{r}{\eta\eta} \ar[equals]{d} & TT1 \ar{d}[swap]{\mu} \\
	& 1 \ar{r}{\eta} & T1 \\
	X \ar{rrr}{\eta} \ar{ur}{u} &&& TX \ar{ul}[swap]{Tu}
 \end{tikzcd}
\end{equation}
If $\mu$ is cartesian, then both the left and right squares in \Cref{double_eta} are pullbacks, hence so is the outer square by the pullback lemma (\Cref{pullback_lemma}).  Likewise, if $\eta$ is cartesian, then both the top and bottom squares in \Cref{double_eta} are pullbacks, hence so is the outer square by the pullback lemma.
\qedhere
\end{proof}

As we will see (\Cref{irreversibility}), the strict positivity condition has significance for partial evaluations by giving conditions for when these are ``irreversible''.
But the following examples are what primarily motivates the terminology.

\begin{ex}
	\label{monoid_spos}
	For $M$ a monoid, consider the $M$-set monad $M \times -$ on $\Set$. Per the above, this monad is strictly positive if and only if the diagram
	\[
		\begin{tikzcd}
			1 \ar[equals]{d} \ar{r}		& M \times M \ar{d}	\\
			1 \ar{r}		& M
		\end{tikzcd}
	\]
	where the arrows denote the obvious structure maps, is a pullback. In other words, the monad $M \times -$ is strictly positive if and only if the unit element of $M$ cannot be factored nontrivially. In other words, if in additive notation for $m,n\in M$ we have $m+n=0$, then $m=n=0$. Intuitively, there are no negative elements.
\end{ex}

\begin{ex}
	\label{nonpositive}
	The monads of monoids and commutative monoids are not strictly positive. Indeed with $X$ a set and any $x \in X$, consider $x \in X$ itself together with the ``doubly formal expression''
	\[
		\boxed{\boxed{x}} + \boxed{\bullet} \;\in\; TTX,
	\]
	where $\bullet \in TX$ denotes the neutral element, and each box denotes a level of formality corresponding to an application of $T$. (We will develop this notation for elements of $T^n X$ more formally in \Cref{trees_and_boxes}.) Then these two elements show that the square \Cref{possquare} is not a pullback, since both elements map to $\boxed{x} \in TX$, but the doubly formal element under consideration differs from $\eta\eta(x) = \boxed{\boxed{x}}$.
\end{ex}
 
\begin{ex}
	\label{semigroups}
	On the other hand, the semigroup monad $T$ is strictly positive, as a consequence of the previous \Cref{cartesian_positive}: it is the monad associated to a non-symmetric operad and therefore cartesian; furthermore, we have $T1 = \N_{> 0}$, and $TT1$ can therefore be identified with the set of nonempty lists of positive integers, in such a way that $\mu : TT1 \to T1$ is the map which takes a list of positive integers and forms their sum. Based on this, it is straightforward to see that the strict positivity condition holds for the object $1$.
	
	Similarly, the commutative-semigroup monad is both weakly cartesian and strictly positive, but not strongly cartesian. 
	Indeed it is weakly cartesian by virtue of being the monad associated to a symmetric operad (\Cref{sym_operads_etc}), namely the one with exactly one operation in each positive arity and no operation in arity zero.
	Moreover, the unit of the monad is monic, which implies that $\eta$ is a cartesian transformation. Hence by \Cref{cartesian_positive}, we only need to check that the square \Cref{possquare} is cartesian for $X=1$, which holds in an analogous manner to the semigroup monad case with multisets instead of lists.
\end{ex}

\begin{ex}
    \label{dist_monad_pos}
 The distribution monad of \Cref{distBC} is strictly positive. Indeed, let $X$ be a set, let $x\in X$ and let $\pi\in DDX$ be such that $\mu(\pi)=\eta(x)$, meaning that for all $y\in X$,
 $$
 \mu(\pi)(y) = \sum_{p\in DX} \pi(p)\,p(y) = \eta(x)(y) = \begin{cases}
                                                            1 & x=y , \\
                                                            0 & x\ne y .
                                                          \end{cases}
 $$
 Now the function $\eta(x):X\to[0,1]$, which returns $1$ at $x$ and $0$ elsewhere, cannot be expressed as a nontrivial convex-combination of other (positive normalized) functions (in convex analysis terminology, it is ``extremal''). Therefore the only possibility is that for all $p\in DX$, 
 $$
 \pi(p) = \eta(\eta(x)) = \begin{cases}
                            1 & p=\eta(x) , \\
                            0 & p\ne \eta(x) .
                          \end{cases}
 $$
 This makes the diagram \eqref{possquare} a pullback for the distribution monad $D$. 
\end{ex}

\subsection{Lifting conditions for algebras}\label{liftalgebras}

So far we have considered lifting conditions applicable for a monad, which can in particular be instantiated on all algebras. We now discuss a lifting condition at the level of individual algebras.

\begin{defn}
	\label{defn_indiscrete}
	Let $T$ be a monad on $\cat{Set}$. We call a $T$-algebra $(A,e)$ \emph{indiscrete} if the algebra square
 $$
 \begin{tikzcd}
	T^2 A \ar{d}[swap]{\mu} \ar{r}{Te} & TA \ar{d}[swap]{e} \\
	TA \ar{r}{e} & A
 \end{tikzcd}
 $$
 is weakly cartesian.
\end{defn}


In terms of partial evaluations, we will see that indiscrete algebras give partial evaluations that can always be reversed and induce the equivalence relation of having equal total evaluation (\Cref{pe_equiv}). This motivates our terminology \emph{indiscrete}.

\begin{ex}
	\label{group_indiscrete}
Let $G$ be a group. Then the $G$-sets, which are the algebras of the monad $G\times -$, are all indiscrete algebras.
 
 Indeed we show that the algebra square
 $$
 \begin{tikzcd}
	G \times G \times A \ar{r}{\id[G]\times e} \ar{d}[swap]{\mu\times \id[A]} & G\times A \ar{d}[swap]{e} \\
	G\times A \ar{r}{e} & A
 \end{tikzcd}
 $$
 is a (strong) pullback. 
 So let $(g,a)$ and $(h,b)$ in $G\times A$ be such that $e(g,a)=e(h,b)$, that is $g a = h b$. Take the element $(g,g^{-1}h,b)\in G\times G\times A$. We have 
 $$
 (\mu \times \id[A])(g,g^{-1}h,b) \;=\; (g g^{-1}h,b) \;=\; (h,b)
 $$
 and 
 $$
 (\id[G]\times e)(g,g^{-1}h,b) \;=\; (g,g^{-1}hb) \;=\; (g,g^{-1}hb) \;=\; (g,g^{-1}ga) \; = \; (g,a) .
 $$
 No other element of $G \times G \times A$ would give us the desired result: the first component must be $g$ in order to map by $G \times e$ to $(g,a)$, while the third component must be $b$ to map by $\mu \times A$ to $(h,b)$; then again because $\mu \times \id[A]$ sends our triple to $(h,b)$, the second component must be $g^{-1} h$.
\end{ex}

\begin{ex}
	\label{malcev_indiscrete}
	As we will see in \Cref{malcev_pev}, every model of a \emph{Mal'cev theory} is an indiscrete algebra of the corresponding monad. For example, since the theory of groups is a Mal'cev theory, every group is an indiscrete algebra of the group monad. Likewise, every abelian group is an indiscrete algebra of the abelian group monad.
\end{ex}

\section{Partial evaluations and their compositional properties}
 \label{secpev}

 We here recall the definition of partial evaluations together with some of their basic properties from~\cite{FP}, and we also prove a number of new results, in particular that the partial evaluation relation is the smallest relation internal to Eilenberg--Moore algebras which relates every formal expression to its result.
 
\subsection{Partial evaluations}

Following~\cite{FP}, our starting point is the simple observation that a formal expression like $3 + 4 + 5$ can not only be totally evaluated to $12$, but it can also be ``partially evaluated'' to $7 + 5$, and that the theory of monads provides a convenient framework for giving a general definition of partial evaluations. If $T$ is a monad on $\Set$ and $e : TA \to A$ is a $T$-algebra, then elements of $TA$ are formal expressions; and a formal expression $t_0 \in TA$ can be \emph{partially evaluated} to a formal expression $t_1 \in TA$ if there is $\tau \in TTA$ such that
\begin{equation}
	\label{pe_defn}
	t_0 = \mu(\tau), \qquad t_1 = (Te)(\tau).
\end{equation}
This intuitively means that $\tau$ is a doubly formal expression which results in $t_0$ upon removing the outer level of formality, and results in $t_1$ upon evaluating the inner level of formality. For the above example, we may take $T$ to be the commutative-monoid monad and
\begin{align}
	\label{first_example_boxed}
	\begin{split}
		t_0 & = \boxed{3} + \boxed{4} + \boxed{5}, \\
		t_1 & = \boxed{7} + \boxed{5}, \\
		\tau & = \boxed{\boxed{3} + \boxed{4}} + \boxed{\boxed{5}},
	\end{split}
\end{align}
where the boxings represent the levels of formality; we will explain this notation in more detail in \Cref{trees_and_boxes}.
(Note that for the commutative-monoid monad, $\boxed{x} + \boxed{y}=\boxed{y} + \boxed{x}$ on the nose, and so rearranging terms in the sum does not require a partial evaluation.)

The equations \Cref{pe_defn} can also be understood in terms of the $T$-algebra diagram
\[
	\begin{tikzcd}
		TTA \ar{r}{Te} \ar{d}[swap]{\mu}	& TA \ar{d}[swap]{e}	\\
		TA \ar{r}{e}			& A
	\end{tikzcd}
\]
which has the given elements $t_0$ and $t_1$ in the lower left and upper right corners, and the element $\tau$ lifts both of these to the upper left (whenever it exists). This makes it obvious that $e(t_0) = e(t_1)$ is a necessary condition for $t_1$ to be a partial evaluation of $t_0$.

Whenever we are only interested in the existence of a partial evaluation from $t_0$ to $t_1$, then we speak of the \emph{partial evaluation relation}. However, in this paper we will go further and in particular study properties of the \emph{partial evaluation witness} $\tau$. 

\begin{remark}
For probability monads, the partial evaluation relation has long been studied in probability theory and economics, where it is known as \emph{second-order stochastic dominance}~\cite[Section~6]{FP}.
\end{remark}

\subsection{Compatibility with $T$-algebraic structure}

If $A$ is an algebra of a monad $T$ on $\Set$ and $R \subseteq A \times A$ is a relation, then $R$ is \emph{internal} if it is a $T$-subalgebra of $A \times A$~\cite{borceux2}, where $A \times A$ carries the usual componentwise $T$-structure corresponding to the product of $T$-algebras.

\begin{defn}
 Let $f,g:X\to Y$ be functions. The \emph{relation generated by $f$ and $g$} is the relation on $Y$ given by the set-theoretical image of the pairing map $(f,g):X\to Y\times Y$.
\end{defn}

This terminology is convenient in that it allows us to say that the partial evaluation relation for a $T$-algebra $(A,e)$ is the relation generated by $\mu$ and $Te:TTA\to TA$.

\begin{prop}
	\label{pe_internal}
	Let $T$ be a monad on $\Set$ and $(A,e)$ any $T$-algebra. The partial evaluation relation on $TA$ is an internal relation, and moreover it is the smallest internal relation which relates a formal expression to its (total) result.
\end{prop}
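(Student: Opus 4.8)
The plan is to prove both assertions at once by identifying the partial evaluation relation $P \subseteq TA \times TA$ --- which, as noted just above the statement, is the relation generated by $\mu_A$ and $Te : T^2A \to TA$, i.e.\ the image of the pairing map $(\mu_A, Te)$ --- with the $T$-subalgebra of $(TA, \mu_A) \times (TA, \mu_A)$ generated by the subset
\[
	S \;\coloneqq\; \bigl\{\, \bigl(t,\, \eta_A(e(t))\bigr) \;:\; t \in TA \,\bigr\} \;\subseteq\; TA \times TA ,
\]
which is precisely the relation ``a formal expression is linked to its total result''. Since the subalgebra of an algebra generated by a subset $S$ is by definition the smallest internal relation containing $S$, this identification shows simultaneously that $P$ is internal and that it is the smallest internal relation with the stated property.

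First I would recall the standard description of the generated subalgebra. For a $T$-algebra $(B, m)$ and a subset $S \subseteq B$ with inclusion $j : S \hookrightarrow B$, the composite $m \circ Tj : (TS, \mu_S) \to (B, m)$ is a morphism of $T$-algebras (by naturality of $\mu$ and associativity of $m$); its image is a $T$-subalgebra of $B$ (images of $T$-algebra morphisms in $\Set$ are subalgebras, as $T$ preserves surjections); it contains $S$ (because $s = m(\eta_B(s))$ and $\eta_B(s) = Tj(\eta_S(s))$ lies in the image of $Tj$); and it is contained in every subalgebra of $B$ containing $S$. Applying this with $(B, m) = (TA, \mu_A) \times (TA, \mu_A)$, whose structure map is the componentwise one $m = (\mu_A \circ T\pi_1,\, \mu_A \circ T\pi_2)$, and with $S$ as above --- noting that $S$ is the image of the injective map $\sigma \coloneqq (\id[TA],\, \eta_A \circ e) : TA \to TA \times TA$, so that $j$ may be taken to be $\sigma$ up to the isomorphism $TA \cong S$ --- the generated subalgebra is the image of $m \circ T\sigma : T^2A \to TA \times TA$.

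It then remains to compute this image. Using $\pi_1 \circ \sigma = \id[TA]$ and $\pi_2 \circ \sigma = \eta_A \circ e$, functoriality of $T$, and the monad unit law $\mu_A \circ T\eta_A = \id[TA]$, one checks that for every $\tau \in T^2A$
\[
	(m \circ T\sigma)(\tau) \;=\; \bigl(\mu_A(T(\pi_1\sigma)(\tau)),\, \mu_A(T(\pi_2\sigma)(\tau))\bigr) \;=\; \bigl(\mu_A(\tau),\, \mu_A(T\eta_A(Te(\tau)))\bigr) \;=\; \bigl(\mu_A(\tau),\, Te(\tau)\bigr) .
\]
Hence the image of $m \circ T\sigma$ equals $\{\,(\mu_A(\tau), Te(\tau)) : \tau \in T^2A\,\} = P$, which is exactly what is needed. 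As a consistency check, taking $\tau = \eta_{TA}(t)$ recovers directly that $P$ itself contains $S$.

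I expect the only real friction to be bookkeeping: getting the product $T$-algebra structure map right and inserting the triangle identity at the correct step. The conceptual input --- that ``generate the internal relation from a subset'' amounts to a single application of the structure map to the free algebra on that subset --- is routine universal algebra. Should one prefer to avoid invoking the generated-subalgebra description, the two halves can also be argued separately: $(\mu_A, Te)$ is a morphism $(T^2A, \mu_{TA}) \to (TA, \mu_A) \times (TA, \mu_A)$ (so its image $P$ is an internal relation), and for minimality any internal relation $R$ containing $S$ is closed under $m$, hence contains $m(T\sigma(\tau))$ for all $\tau$, which the displayed computation identifies with the general element $(\mu_A(\tau), Te(\tau))$ of $P$.
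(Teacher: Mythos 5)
Your proposal is correct and follows essentially the same route as the paper: the paper's auxiliary lemma (that the smallest internal relation containing the relation generated by $f,g:X\to A$ is the one generated by $e\circ Tf$ and $e\circ Tg$) is exactly your ``generated subalgebra equals the image of $m\circ Tj$'' fact, instantiated at the pair $(\id[TA],\,\eta_A\circ e)$ on the free algebra $(TA,\mu_A)$. The decisive computation is likewise identical in both arguments, namely collapsing $\mu_A\circ T(\eta_A\circ e)$ to $Te$ via the unit law $\mu_A\circ T\eta_A=\id[TA]$.
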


We present the proof below based on the following technical lemma. 

\begin{lemma}\label{internalrel}
 Let $T$ be a monad on $\cat{Set}$. Then the relation generated by a pair of parallel morphisms of $T$-algebras is internal. Moreover, if $(A,e)$ is a $T$-algebra, then the smallest \emph{internal} relation larger or equal than the (set-theoretical) relation generated by a pair of maps $f,g:X\to A$ for any set $X$ is the relation generated by the parallel pair of composites
 \begin{equation}\label{newpair}
 \begin{tikzcd}
	TX \ar[shift left]{r}{Tf} \ar[shift right]{r}[swap]{Tg} & TA \ar{r}{e} & A 
 \end{tikzcd}
 \end{equation}
\end{lemma}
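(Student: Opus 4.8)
The plan is to prove the two assertions in turn, using the first as the engine for the second.

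\textbf{First assertion.} I would first recall that for a monad $T$ on $\Set$ the category of algebras admits (surjection, injection)-factorizations: any surjection of sets onto a nonempty codomain splits, hence $T$ preserves surjections, and an epi--mono factorization in $\Set$ of an algebra morphism lifts to a factorization through a subalgebra. Concretely, given $T$-algebra morphisms $f,g \colon W \to A$ with $W$ carrying structure map $w \colon TW \to W$, factor the pairing $(f,g) \colon W \to A \times A$ as a surjection $q \colon W \twoheadrightarrow R$ onto $R = \im(f,g)$ followed by the inclusion $m \colon R \hookrightarrow A \times A$. Since $Tq$ is surjective and $m$ is injective, the composite $q \circ w \colon TW \to R$ descends along $Tq$ to a well-defined map $TR \to R$ (well-definedness because $m\,q\,w$ is the composite of $Tq$ with the structure map of $A \times A$ and $m$ is monic), and the algebra axioms for this map follow from those of $W$ because $q$ is epic. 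This exhibits $R$ as a $T$-subalgebra of $A \times A$, i.e.\ the relation generated by $f$ and $g$ is internal. (Alternatively, one may simply cite that $\Set^T$ is a regular category.)

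\textbf{Second assertion.} Fix a $T$-algebra $(A,e)$ and set maps $f,g \colon X \to A$, and let $R \coloneqq \im(e \circ Tf,\, e \circ Tg) \subseteq A \times A$ be the relation generated by the parallel pair \eqref{newpair}. I would verify three things. (i) \emph{$R$ is internal:} the maps $e \circ Tf$ and $e \circ Tg$ are $T$-algebra morphisms out of the free algebra $(TX,\mu_X)$ --- $Tf \colon (TX,\mu_X) \to (TA,\mu_A)$ is functoriality applied to a free algebra, and postcomposing with $e$ is allowed since $e \colon (TA,\mu_A) \to (A,e)$ is an algebra morphism by the associativity axiom --- so internality follows from the first assertion. (ii) \emph{$R$ contains the relation generated by $f,g$:} naturality of $\eta$ and the unit axiom $e \circ \eta_A = \id[A]$ give $e \circ Tf \circ \eta_X = f$ and likewise for $g$, so $(f,g) = (e \circ Tf,\, e \circ Tg) \circ \eta_X$ factors through $R$. (iii) \emph{$R$ is smallest:} let $S \subseteq A \times A$ be any internal relation containing $\im(f,g)$, with structure map $s \colon TS \to S$ and inclusion $\iota \colon S \hookrightarrow A \times A$. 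Then $(f,g)$ factors through a set map $h \colon X \to S$, which by freeness of $TX$ extends to an algebra morphism $\bar h = s \circ Th \colon TX \to S$ with $\bar h \circ \eta_X = h$. Now $\iota \circ \bar h$ and $(e \circ Tf,\, e \circ Tg)$ are both algebra morphisms $TX \to A \times A$ restricting to $(f,g)$ along $\eta_X$, so they coincide by the universal property of the free algebra; hence $R = \im(\iota \circ \bar h) \subseteq \im(\iota) = S$.

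The only step requiring genuine care is the first assertion --- that the image of an algebra morphism carries a (necessarily unique) subalgebra structure --- since it leans on $T$ preserving surjections and on the well-definedness of the descended structure map. Once that is in hand, the second assertion is a routine chase through the monad and algebra axioms together with the universal property of free algebras, so I expect no further obstacle there.
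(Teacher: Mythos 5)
Your proposal is correct and follows essentially the same route as the paper: the first assertion via the image factorization of the pairing map in $\Set^T$ (which you unpack explicitly where the paper cites that the forgetful functor preserves image factorizations), and the second via internality of the relation generated by the mates $e\circ Tf$, $e\circ Tg$ together with a factorization through any internal relation containing $\im(f,g)$. Your only real variation is in the last step, where you identify $(e\circ Tf,\,e\circ Tg)$ with the extension $\iota\circ\bar h$ by the uniqueness clause of the free-algebra universal property, whereas the paper reaches the same identification by an explicit computation with the structure map of the product algebra; both are valid.
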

Note that these composites are the mates of $f$ and $g$---sometimes denoted by $f^\sharp$ and $g^\sharp$---under the usual monadic adjunction,
\[
	\Set^T(TX,A) \cong \Set(X,A).
\]
\begin{proof}[Proof of \Cref{internalrel}]
 First of all, the pairing $(p,q):A\to B\times B$ of two morphisms of algebras $p,q:A\to B$ is again a morphism of algebras. The relation generated by $p$ and $q$ is the set-theoretic image of this map, and since the forgetful functor $U:\cat{Set}^T\to \cat{Set}$ preserves image factorizations~\cite[Theorem~4.3.5]{borceux2}, it follows that this image is a $T$-subalgebra.
 
 Now let $f,g:X\to A$. The relation generated by $e\circ Tf$ and $e\circ Tg$ is internal, as we have just shown, and a straightforward argument involving $\eta : X \to TX$ shows that it contains the relation generated by $f$ and $g$.
 Suppose now that an internal relation $R\subseteq A\times A$ contains the one generated by $f$ and $g$, i.e.~that the map $(f,g):X\to A\times A$ factors through $R$. We have the commutative diagram
 \begin{equation}\label{smallestsubalg}
 \begin{tikzcd}
	TX \ar{r}[swap]{Tp} \ar[bend left=15]{rr}{T(f,g)} & TR \ar{d}[swap]{e} \ar{r}[swap]{Ti} & T(A\times A) \ar{d}[swap]{e} \\
	X \ar[bend right=15]{rr}[swap]{(f,g)} \ar{r}{p} & R \ar[hookrightarrow]{r}{i} & A\times A
 \end{tikzcd}
 \end{equation}
 where $i:R\to A \times A$ is the inclusion (which is a morphism of algebras), and $p$ is the unique map such that $(f,g)=i\circ p$.
 By commutativity of \Cref{smallestsubalg}, the map $e\circ T(f,g)$ factors through $R$, and so the relation $R$ contains the image of $e\circ T(f,g)$. 
 
 Now, the image of $e\circ T(f,g)$ is the relation generated by the pair \Cref{newpair}, since $e\circ T(f,g)= (e\times e)\circ (Tf,Tg)$. To see this, recall that the structure map of the product algebra $e:T(A\times A)\to A\times A$ is given by the composite
 $$
 \begin{tikzcd}
	T(A\times A) \ar{r}{\nabla} & TA \times TA \ar{r}{e\times e} & A \times A,
 \end{tikzcd}
 $$
 where the map $\nabla$ is the unique map which makes the following diagram commute,
 $$
 \begin{tikzcd}
	&& TA \\
	T(A\times A) \ar{urr}{T\pi_1} \uni{rr}{\nabla} \ar{drr}[swap]{T\pi_2} && TA\times TA \ar{u}[swap]{\pi_1} \ar{d}{\pi_2} \\
	&& TA
 \end{tikzcd}
 $$
 where $\pi_1,\pi_2:A\times A\to A$ are the product projections.
 Now by the commutativity of
 $$
 \begin{tikzcd}
	&& TA  \ar{r}{e} & A \\
	TX \ar[bend left]{urr}[swap]{Tf} \ar{r}{T(f,g)} \ar[bend right]{drr}{Tg}  & T(A\times A) \ar{ur}{T\pi_1} \ar{r}{\nabla} \ar{dr}[swap]{T\pi_2} & TA\times TA \ar{u}[swap]{\pi_1} \ar{d}{\pi_2} \ar{r}{e\times e} & A\times A \ar{u}[swap]{\pi_1} \ar{d}{\pi_2} \\
	&& TA  \ar{r}{e} & A
 \end{tikzcd}
 $$
 and by the universal property of the product $A\times A$, we conclude that $e\circ T(f,g) = (e\times e)\circ\nabla\circ T(f,g) = (e\circ Tf,e\circ Tg)$.
 
 Overall, we have therefore shown that $R$ contains the internal relation generated by the pair \Cref{newpair} consisting of $e \circ Tf$ and $e \circ Tg$. This relation in turn contains the relation generated by $f$ and $g$. Since $R$ was an arbitrary internal relation containing the one generated by $f$ and $g$, it follows that the relation generated by \Cref{newpair} is the smallest internal relation generated by $f$ and $g$.
\end{proof}

\begin{proof}[Proof of \Cref{pe_internal}]
	The maps $\mu,Te:TTA\to TA$ are morphisms of algebras, and so by \Cref{internalrel}, the relation they generate is internal.
	
	Consider now the parallel pair
	$$
	\begin{tikzcd}
	 TA \ar{r}[swap]{e} \ar[bend left=15]{rr}{\id} & A \ar{r}[swap]{\eta} & TA
	\end{tikzcd}
	$$
	The relation generated by these maps is the one that links a formal expression to its total result. Note that the lower map is \emph{not} a morphism of algebras in general, because $\eta$ is not. The \emph{internal} relation generated by these maps, by \Cref{internalrel} and instantiating \Cref{newpair} for the free algebra $(TA,\mu)$, is given by the pair of composites
	$$
	\begin{tikzcd}
	 TTA \ar{r}[swap]{Te} \ar[bend left=15]{rr}{\id} & TA \ar{r}[swap]{T\eta} & TTA \ar{r}{\mu} & TA
	\end{tikzcd}
	$$
	which since $\mu\circ T\eta=\id$ (right unitality triangle of the monad) is equal to 
	$$
	\begin{tikzcd}
		TTA \ar[shift left]{r}{\mu} \ar[shift right]{r}[swap]{Te} & TA,
	\end{tikzcd}
	$$
	which generates the partial evaluation relation by definition.
\end{proof}

It is a standard fact, used for example in the context of Beck's monadicity theorem, that an algebra $A$ is canonically a quotient of the free algebra $A$, in the sense of being the coequalizer of $\mu, Te : TTA \to TA$. Interpreting these maps in terms of partial evaluations gives the following.

\begin{remark}
	Every algebra $A$ is the quotient algebra of $TA$ obtained by identifying formal expressions with their results.
\end{remark}

\subsection{Composition of partial evaluations}\label{composition}

Suppose now that we have three formal expressions $t_0, t_1, t_2 \in TA$, and that $t_1$ is a partial evaluation of $t_0$ with witness $\tau_{01}$, and likewise that $t_2$ is a partial evaluation of $t_1$ with witness $\tau_{12}$. Then does it follow that $t_2$ is also a partial evaluation of $t_1$? In other words, is the partial evaluation relation transitive? And if so, is there a canonical choice of witness constructed in terms of $\tau_{01}$ and $\tau_{12}$?

In~\cite{FP}, we had shown that if $T$ is a weakly cartesian monad, then the partial evaluation relation is indeed transitive. In fact, the proof goes through in general for BC monads, and can be illustrated in terms of the following diagram.

\begin{equation}
\label{theta_cube}
\begin{split}
\begin{tikzpicture} 
\tikzset{nodestyle/.style={inner sep = 3pt, align = center}};
	\node[nodestyle]at (-1,9) (D) {$\{*\}$};
	\node[nodestyle]at (1,4) (a') {$T^2A$};
	\node[nodestyle,blue]at (0.2,4.4) (ta') {$\tau_{01}$};
	\node[nodestyle]at (5,4) (b') {$TA$};
	\node[nodestyle,blue]at (5,3.5) (tb') {$t_1$};
	\node[nodestyle]at (5,7.8) (c') {$T^2A$};
	\node[nodestyle,blue]at (5,8.3) (tc') {$\tau_{12}$};
	\node[nodestyle]at (1,7.8) (d') {$T^3A$};
	\node[nodestyle,blue]at (1,8.3) (td') {$\Theta$};
	\node[nodestyle]at (2.2,2.5) (a) {$TA$};
	\node[nodestyle,blue]at (2.2,2) (ta) {$t_0$};
	\node[nodestyle]at (6,2.5) (b) {$A$};
	\node[nodestyle]at (6,6.5) (c) {$TA$};
	\node[nodestyle,blue]at (6.6,6.5) (tc) {$t_2$};
	\node[nodestyle]at (2.2,6.5) (d) {$T^2A$};
	\draw[->](a') -- node[gray, above]{\tiny $Te$} (b');
	\draw[->](c')-- node[gray, left]{\tiny $\mu$}(b');
	\draw[->](d')-- node[gray, above]{\tiny $TTe$}(c');
	\draw[->](d')-- node[gray, left]{\tiny $\mu$}(a');
	\draw[white,line width=7, -](d) -- (c);
	\draw[white,line width=7, -](d) -- (a);
	\draw[->](a)-- node[gray, above]{\tiny $e$}(b);
	\draw[->](c)-- node[gray, right]{\tiny $e$}(b);
	\draw[->](d)-- node[gray, above]{\tiny $Te$}(c);
	\draw[->](d)-- node[gray, left, yshift=8]{\tiny $\mu$}(a);
	\draw[->](a')-- node[gray, below, near start]{\tiny $\mu\ $}(a);
	\draw[->](b')-- node[gray, above]{\tiny $\ e$}(b);
	\draw[->](c')-- node[gray, above]{\tiny $\ \ \ Te$}(c);
	\draw[->](d')-- node[gray, above, near end]{\tiny $\ \ T\mu$}(d);
	\draw[dashed,->](D) to[out=-90](a');
	\draw[thick,dotted,->](D)--(d');
	\draw[dashed,->](D) to[out=10](c');
	\draw[blue!20!white,line width=5, opacity=0.5](a) -- (b');
	\draw[blue,->](a)--(b');
	\draw[blue!20!white,line width=5, opacity=0.5](b') -- (c);
	\draw[blue,->](b')--(c);
	\draw[white,line width=7, -](a) -- (c);
	\draw[green!20!white,line width=5, opacity=0.5](a) -- (c);
	\draw[blue,->](a)--(c);
\end{tikzpicture}
\end{split}
\end{equation}
This diagram of ordinary black arrows commutes by general properties of monads. The blue arrows indicate the partial evaluations, keeping in mind that these are not morphisms in the same way as the other arrows are. Since the back square is a naturality square for $\mu$, it is possible to lift $\tau_{01} \in T^2 A$ and $\tau_{12} \in T^2 A$ to an element $\Theta \in T^3 A$ as soon as $\mu$ is a weakly cartesian transformation, which in particular holds in a BC monad. Then using commutativity of the diagram, it is easy to see that $\tau_{02} \coloneqq (T\mu)(\Theta)$ is a partial evaluation witness from $t_0$ to $t_2$. We intuitively think of the new witness $\tau_{02}$ as a \emph{composite} of the witnesses $\tau_{01}$ and $\tau_{12}$, and we therefore also call $\Theta$ a \emph{composition strategy}.

We have hence shown the following:

\begin{prop}
	\label{pe_transitive}
	If $T$ is a BC monad and $A$ any $T$-algebra, then the partial evaluation relation on $TA$ is transitive.
\end{prop}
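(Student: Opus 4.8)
The plan is to make precise the construction indicated by the cube \Cref{theta_cube}, invoking weak cartesianness of $\mu$ exactly once. Fix formal expressions $t_0, t_1, t_2 \in TA$ and partial evaluation witnesses $\tau_{01}, \tau_{12} \in T^2 A$, so that
\[
	\mu(\tau_{01}) = t_0, \qquad (Te)(\tau_{01}) = t_1, \qquad \mu(\tau_{12}) = t_1, \qquad (Te)(\tau_{12}) = t_2 .
\]
The goal is to produce a witness $\tau_{02} \in T^2 A$ with $\mu(\tau_{02}) = t_0$ and $(Te)(\tau_{02}) = t_2$, which is by definition a partial evaluation from $t_0$ to $t_2$.

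First I would consider the naturality square of the multiplication $\mu$ at the structure map $e : TA \to A$: this is the square with top-left corner $T^3A$, with both other corners except the last equal to $T^2A$, and with bottom-right corner $TA$, whose two maps out of $T^3A$ are $TTe$ and $\mu$ and whose two maps into $TA$ are $\mu$ and $Te$; it is the back square of \Cref{theta_cube}. Since $T$ is BC, the transformation $\mu$ is weakly cartesian, so this square is a weak pullback. The witness $\tau_{01}$ sits in the bottom-left corner and $\tau_{12}$ in the top-right corner, and they have a common image in $TA$, namely $t_1$: indeed $(Te)(\tau_{01}) = t_1 = \mu(\tau_{12})$, the point being precisely that $t_1$ is the shared endpoint of the two given partial evaluations. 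By the weak pullback property there is therefore a lift $\Theta \in T^3 A$ with $\mu(\Theta) = \tau_{01}$ and $(TTe)(\Theta) = \tau_{12}$; this is the composition strategy of \Cref{theta_cube}.

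Then I would set $\tau_{02} \coloneqq (T\mu)(\Theta)$ and verify the two required identities by a short diagram chase. Monad associativity $\mu \circ T\mu = \mu \circ \mu$ gives $\mu(\tau_{02}) = \mu(\mu(\Theta)) = \mu(\tau_{01}) = t_0$, while the algebra law $e \circ \mu = e \circ Te$, applied under $T$, gives
\[
	(Te)(\tau_{02}) = T(e \circ \mu)(\Theta) = T(e \circ Te)(\Theta) = (Te)\bigl((TTe)(\Theta)\bigr) = (Te)(\tau_{12}) = t_2 .
\]
Hence $\tau_{02}$ witnesses that $t_2$ is a partial evaluation of $t_0$, and the partial evaluation relation is transitive.

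I do not expect a serious obstacle here. The only nonformal ingredient is the single use of weak cartesianness of $\mu$; everything else is the commutativity of the black part of the cube \Cref{theta_cube}, which is just the monad and algebra axioms. The care that is needed lies in setting up the argument cleanly: one must check that the square being invoked really is the naturality square of $\mu$ (so that the BC hypothesis applies verbatim), and that the elements $\tau_{01}$ and $\tau_{12}$ agree on the nose in $TA$, which is exactly the matching condition that produces the lift $\Theta$. With these two points in place, the proof is essentially the argument of \cite{FP} for weakly cartesian monads, now run under the weaker BC assumption.
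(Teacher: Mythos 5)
Your proof is correct and is exactly the paper's argument: the paper also lifts $\tau_{01}$ and $\tau_{12}$ through the naturality square of $\mu$ at $e$ (the back face of \Cref{theta_cube}), using weak cartesianness of $\mu$ from the BC hypothesis, and then takes $\tau_{02} \coloneqq (T\mu)(\Theta)$. Your explicit verification of the two identities via associativity of $\mu$ and the algebra law is just a spelled-out version of the paper's appeal to commutativity of the cube.
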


It is natural to ask whether this transitivity holds in general. This turns out not to be the case, but finding a counterexample has been surprisingly tricky.

\begin{thm}
	\label{pe_not_transitive}
	There is a finitary monad $T$ on $\mathsf{Set}$ together with a $T$-algebra $A$ such that the partial evaluation relation on $TA$ is not transitive.	
\end{thm}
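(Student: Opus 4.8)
By \Cref{pe_transitive}, any monad witnessing the failure of transitivity must fail to be BC, and the obstruction identified in the proof of \Cref{pe_transitive} is precisely the weak cartesianness of $\mu$, which is what supplies the composition strategy $\Theta \in T^3 A$. The plan is therefore to engineer a finitary monad $T$ for which $\mu$ is \emph{not} weakly cartesian, and then to exhibit the failure of transitivity by hand. Following the hint in the abstract, I would take $A$ to be the terminal algebra $1 = \{\ast\}$: then the necessary condition $e(t_0) = e(t_1)$ becomes vacuous, so the partial evaluation relation on $T1$ is exactly the relation generated by the pair of maps $\mu, Te : T^2 1 \to T1$ (here $Te$ is just $T$ applied to the unique map $T1 \to 1$), and the whole problem reduces to controlling these two maps on the set $T^2 1$.

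\textbf{The construction and its verification.}
Concretely, I would search for a small algebraic theory (automatically presenting a finitary monad on $\Set$) together with three elements $t_0, t_1, t_2 \in T1$ and witnesses $\tau_{01}, \tau_{12} \in T^2 1$ realising $t_0 \sto t_1$ and $t_1 \sto t_2$, such that \emph{no} $\tau \in T^2 1$ simultaneously satisfies $\mu(\tau) = t_0$ and $Te(\tau) = t_2$. The design principle is to make $t_0, t_1, t_2$ formal expressions of increasing ``depth'', and to impose exactly enough equations that every doubly-formal expression over $T1$ lying over $t_2$ (that is, with $Te(\tau) = t_2$) is \emph{forced} by the equations to evaluate under $\mu$ to something other than $t_0$, while still leaving room for the two one-step partial evaluations to exist via genuinely different disaggregations. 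In a BC monad, $\tau_{01}$ and $\tau_{12}$ would always amalgamate into a $\Theta \in T^3 1$ over the naturality square of $\mu$ at $e$, and $T\mu(\Theta)$ would be the missing witness; so the theory must in particular block this amalgamation, which is exactly where $\mu$ fails to be weakly cartesian. Once the theory is fixed with $T1$ finite, the sets $T^2 1$ and $T^3 1$ are finite and explicitly enumerable, and the proof becomes a finite check: exhibit $\tau_{01}$ and $\tau_{12}$ explicitly, then run through all $\tau \in T^2 1$ with $Te(\tau) = t_2$ and confirm $\mu(\tau) \neq t_0$ in each case.

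\textbf{Main obstacle.}
The difficulty is the construction itself, consistent with the remark in the text that finding a counterexample ``has been surprisingly tricky''. There is a genuine tension: keeping the verification finite wants $T1$ finite, which forces the theory to contain collapsing equations (idempotency-type relations and the like); but the more the theory collapses, the more likely transitivity holds anyway — for example one checks directly that the bounded semilattice monad (idempotent commutative monoids) has a transitive partial evaluation relation on its terminal algebra, and more generally any BC monad does by \Cref{pe_transitive}. The theory must therefore be chosen to sit in the narrow band where $\mu$ fails weak cartesianness badly enough to kill off every candidate witness from $t_0$ to $t_2$, yet where the collapsing equations do not reintroduce transitivity through a side door. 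Identifying such a theory, rather than the subsequent bookkeeping, is where essentially all of the work lies.
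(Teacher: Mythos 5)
Your framing is sound and matches the paper's setup in two respects: you correctly take $A$ to be the terminal algebra so that the condition $e(t_0)=e(t_1)$ is vacuous and everything reduces to the two maps $\mu, Te : T^2 1 \to T1$, and you correctly locate the obstruction in the failure of weak cartesianness of $\mu$. But there is a genuine gap: you never exhibit the monad. The entire mathematical content of this theorem is the existence of a specific counterexample, and your proposal is a description of a search strategy together with design heuristics (``impose exactly enough equations that every doubly-formal expression over $t_2$ is forced to evaluate to something other than $t_0$''), not a proof. You acknowledge this yourself when you write that identifying the theory ``is where essentially all of the work lies'' --- that work is precisely what is missing. As written, nothing rules out the possibility that every theory satisfying your desiderata secretly has a transitive partial evaluation relation for some reason not visible from the BC analysis; only a concrete example settles the question.

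For comparison, the paper's proof takes $T$ to be the $S$-semimodule monad for the commutative semiring $S = \N[X]/\langle X^2 = 2\rangle$ (realizable as the subsemiring of $\R_{\geq 0}$ generated by $\sqrt{2}$) and $A = \{\ast\}$. One has $\boxed{\ast} \sto 2\,\boxed{\ast}$ witnessed by $\boxed{\bullet} + \boxed{\boxed{\ast}}$, and $2\,\boxed{\ast} \sto X\boxed{\ast}$ witnessed by $X\boxed{X\boxed{\ast}}$ (using $X^2 = 2$); but no witness from $\boxed{\ast}$ to $X\boxed{\ast}$ exists, because such a $\tau : TA \to S$ must have its values sum to $X$, which by additive indecomposability of $X$ forces $\tau = X\delta_{t,r}$ for a single $r \in TA$, and then $\mu(\tau) = Xr\,\boxed{\ast}$ can never equal $\boxed{\ast}$ since $Xr = 1$ has no solution in $S$. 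Note also that this verification is structural, not a finite enumeration: $T1 = S$ is infinite in the paper's version (the paper remarks in passing that one could additionally impose $2+1=2$ to make $S$ finite), so your insistence on a finite $T1$ with brute-force checking is neither necessary nor how the argument actually closes. The two key algebraic facts --- additive indecomposability of $X$ and non-invertibility of $X$ against $1$ --- are exactly the kind of ``narrow band'' conditions you gesture at, but they have to be produced, not postulated.
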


The following proof presents an explicit example. The way in which we found this example owes a lot to work of Clementino, Hofmann and Janelidze: we first constructed a semiring satisfying conditions (a)--(c) but not (d)--(e) of their \cite[Theorem 8.10]{CHJ}. However, the following presentation is largely self-contained.

\begin{proof}
	Let $S$ be the commutative semiring\footnote{Recall that a \emph{semiring} (sometimes called a \emph{rig}) is defined as a set with addition and multiplication operations like on a ring, but without the requirement that additive inverses must exist~\cite{golan}. $\N[X]$ is the semiring of polynomials in one variable $X$ with natural number coefficients and the usual addition and multiplication of polynomials.} $S \coloneqq \N[X]/\langle X^2 = 2\rangle$. This means that the elements of $S$ are of the form\footnote{If so desired, we could also make $S$ finite by imposing $2 + 1 = 2$ in addition, so that every semiring element could be represented as above with $a,b\in\{0,1,2\}$. This would result in $|S| = 9$, and the same argument would go through and produce a more minimal counterexample. But we will not do this in order to keep the example as simple as possible.}
	\[
		a + b X
	\]
	for $a,b\in\N$, with componentwise addition, and multiplication such that
	\[
		(a_1 + b_1 X) (a_2 + b_2 X) = (a_1 a_2 + 2 b_1 b_2) + (a_1 b_1 + a_2 b_2) X.
	\]
	This semiring can be realized concretely as the smallest subsemiring of $(\R_+,+,\cdot)$ containing the number $\sqrt{2}$, so that $a + bX$ corresponds to $a + b\sqrt{2}$.\footnote{We thank Martti Karvonen for having pointed this out to us.}

	The equation
	\begin{equation}
		\label{XX2}
		X \cdot X = 1 + 1
	\end{equation}
	in $S$ will be what makes the counterexample work, together with the following two facts:
	\begin{itemize}
		\item $X\in S$ is additively indecomposable: if $X = r + s$ with $r,s\in S$, then $r = 0$ or $s = 0$.
		\item There is no $r\in S$ with $Xr = 1$: writing $r = a + bX$ for a putative such $r$, we find that $Xr$ must have constant coefficient $\geq 2$ if $b \neq 0$, which we do not want; but if $b = 0$, then $Xr$ is merely a multiple of $X$, which is not what we want either.
	\end{itemize}
	Now let $T$ be the $S$-semimodule monad. This means that $TX$ for $X \in \Set$ is the set of finitely supported functions $X \to S$, and we interpret and denote these as formal $S$-linear combinations. The monad structure of $T$ is the obvious one which makes $T$-algebras into $S$-semimodules; we refer to~\cite[Section~6]{CHJ} for more details.
	
	Let $A \coloneqq \{\ast\}$ the one element $T$-algebra, i.e.~the zero $S$-semimodule. We will use box notation as in \Cref{first_example_boxed}. 
	Now, $\boxed{\ast}\in TA$ partially evaluates to $2\,\boxed{\ast}\in TA$, as witnessed by
			\[
				\boxed{\bullet} + \boxed{\boxed{\ast}} \in TTA.
			\]
			Indeed this is the formal $S$-linear combination given by the formal sum of the ``empty expression'' $0 = \bullet \in TA$ and $\eta{\ast} = \boxed{\ast} \in TA$. Applying $\mu$ to this doubly formal expression removes the outer brackets, giving $0 + \boxed{\ast} = \boxed{\ast}$, while applying $Te$ amounts to removing the inner brackets, including the evaluation of $\bullet$ to $\ast$, giving the desired $\boxed{\ast} + \boxed{\ast}$.
	Also, $2\, \boxed{\ast}\in TA$ partially evaluates to $X\boxed{\ast}\in TA$, as witnessed by
			\[
				X\boxed{X \boxed{\ast}} \in TTA.	
			\]
			Indeed, removing the outer brackets gives $X^2\, \boxed{\ast} = 2\, \boxed{\ast}$, while removing the inner brackets results in $X\boxed{\ast}$ since $X\cdot \ast = \ast$.
	Thus if the partial evaluation relation were transitive, $\boxed{\ast}\in TA$ would also have to partially evaluate to $X\boxed{\ast}\in TA$.
	To see that this is not the case, note that elements $\tau \in TTA$ are finitely supported functions $\tau : TA \to S$. Using the definition of the functor $T$ on the algebra map $e$ gives a description of $(Te)(\tau) \in TA$ as a finitely supported function $A \to S$, namely
			\[
				(Te)(\tau) = \left( \ast \mapsto \sum_{t\in TA} \tau(t) \right) .
			\]
			In other words, since $A$ is the zero module, $Te : TTA \to TA$ simply sums up all values of the function $\tau$.
			
			Now suppose that such a $\tau : TA \to S$ witnesses the putative partial evaluation from $\boxed{\ast}$ to $X\boxed{\ast}$. Since the sum of values of $\tau$ must be $X$, by using the fact that $X$ is additively indecomposable in $S$, we conclude that we must have $\tau(t) = X \delta_{t,r}$ for some $r\in TA$.

			On the other hand, applying $\mu$ to this $\tau$ then results in
			\[
				\sum_t \tau(t) \, t\cdot \boxed{\ast} = X r\cdot \boxed{\ast}.
			\]
			In order for this to be equal to just $\boxed{\ast}$, we need to have $Xr = 1$. But this is impossible in $S$ as also noted above.\qedhere
\end{proof}

\subsection{Reversing partial evaluations}\label{reversing}

For the commutative-monoid monad, there is a partial evaluation from $\boxed{3}+\boxed{4}+\boxed{5}$ to $\boxed{7}+\boxed{5}$, but there is none the other way around. Thus, as its name already indicates, the partial evaluation relation is typically not symmetric. However, there also are monads for which it is symmetric on \emph{all} of its algebras. In the following, we give some criteria for when this and related phenomena occur. Recall the notion of indiscrete algebra from \Cref{defn_indiscrete}.

\begin{prop}
	\label{pe_equiv}
	Let $T$ be a monad on $\cat{Set}$, and let $(A,e)$ be a $T$-algebra. Then $A$ is indiscrete if and only if the partial evaluation relation is an equivalence relation. In this case, the equivalence relation obtained is the kernel pair of $e : TA \to A$.
\end{prop}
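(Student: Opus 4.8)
The plan is to unfold the definition of indiscreteness pointwise in $\Set$ and compare the resulting condition directly with the partial evaluation relation. Write $P\subseteq TA\times TA$ for the partial evaluation relation (so $(t_0,t_1)\in P$ iff there is $\tau\in T^2A$ with $\mu(\tau)=t_0$ and $(Te)(\tau)=t_1$) and $K\subseteq TA\times TA$ for the kernel pair of $e:TA\to A$, i.e.\ $K=\{(t_0,t_1):e(t_0)=e(t_1)\}$. Using the elementwise description of weak pullbacks in $\Set$ recalled after \Cref{weak pullback}, the algebra square of $(A,e)$ is weakly cartesian precisely when every pair $(t_0,t_1)\in K$ lifts to some $\tau\in T^2A$, that is, precisely when $K\subseteq P$; so $A$ is indiscrete (\Cref{defn_indiscrete}) if and only if $K\subseteq P$. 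Since $P\subseteq K$ always holds (as noted in \Cref{secpev}), indiscreteness of $A$ is equivalent to $P=K$, which will also settle the last sentence of the statement as soon as either implication is proved.

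Two auxiliary facts, each obtained by writing down an explicit witness, will be used. First, $P$ is reflexive: for $t\in TA$, the element $(T\eta)(t)\in T^2A$ witnesses $t\to t$ because $\mu\circ T\eta=\id$ (monad unit law) and $(Te)\circ(T\eta)=T(e\circ\eta)=\id$ (algebra unit law). Second, every formal expression partially evaluates to its total result: for $t\in TA$, the element $\eta_{TA}(t)\in T^2A$ witnesses $t\to\eta(e(t))$ because $\mu\circ\eta_{TA}=\id$ and, by naturality of $\eta$, $(Te)\circ\eta_{TA}=\eta\circ e$.

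For the forward direction: if $A$ is indiscrete then $P=K$ by the reduction above, and $K$ is an equivalence relation, hence so is $P$; this also gives the final sentence of the proposition. For the converse, assume $P$ is an equivalence relation and let $(t_0,t_1)\in K$, say $e(t_0)=e(t_1)=a$. The second auxiliary fact gives $t_0\to\eta(a)$ and $t_1\to\eta(a)$; symmetry of $P$ converts the latter into $\eta(a)\to t_1$, and transitivity then produces $t_0\to t_1$. Thus $K\subseteq P$, so $P=K$ and $A$ is indiscrete. The argument contains no serious obstacle: the only real idea is that in an equivalence relation any needed partial evaluation can be factored through the total evaluation of its source, which is always available; the steps that require care are merely the elementwise reformulation of ``weakly cartesian square in $\Set$'' and the verification of the two witness identities against the monad and algebra axioms.
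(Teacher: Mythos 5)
Your proof is correct and follows essentially the same route as the paper's: identify indiscreteness with the inclusion $K\subseteq P$ via the elementwise description of weak pullbacks, and for the converse deduce $K\subseteq P$ from the existence of partial evaluations to total results together with symmetry and transitivity. The only difference is that you spell out explicitly (with the witnesses $\eta_{TA}(t)$ and the chain $t_0\to\eta(a)\leftarrow t_1$) a step the paper leaves compressed, which is a reasonable expansion rather than a new idea.
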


The final statement means that for $t_0, t_1 \in TA$, there is a partial evaluation from $t_0$ to $t_1$ if and only if these two expressions have the same result, $e(t_0) = e(t_1)$. Equivalently, the quotient of the equivalence relation is exactly $A$.

\begin{proof}
	Suppose that $(A,e)$ is indiscrete, meaning that the algebra square
 	$$
 		\begin{tikzcd}
			TTA \ar{d}[swap]{\mu} \ar{r}{Te} & TA \ar{d}[swap]{e} \\
	  		TA \ar{r}{e} & A
 		\end{tikzcd}
	$$
	is a weak pullback. This means exactly that two formal expressions in $TA$ have the same result in $A$ if and only if there exists a partial evaluation between them.

	Conversely, suppose that the partial evaluation relation for $A$ is an equivalence relation. Since two expressions that have different results cannot admit a partial evaluation between them, the partial evaluation relation must be finer or equal than the kernel pair of $e$. Just as well, since there always exists a partial evaluation from any formal expression to its result (total evaluation), the equivalence relation is necessarily coarser or equal to the kernel pair of $e$. Hence the two equivalence relations coincide.
\end{proof}

\begin{ex}
	\label{group_equiv}
	Let $G$ be a group and $G \times -$ the $G$-action monad. Then the algebras of this monad are indiscrete (\Cref{group_indiscrete}), and hence the partial evaluation relation for $G$-sets is an equivalence relation. More concretely, for a $G$-set $A$ it is the relation on $G \times A$ given by $(g,a) \sim (h,b)$ if and only if $ga = hb$.
\end{ex}

The same turns out to be the case for the group monad and the abelian group monad, where one can also intuitively ``invert'' things. These are instances of a more general statement which we now turn to, based on the following classical notion of universal and categorical algebra~\cite{borceuxbourn}.

\begin{defn}
 A \emph{Mal'cev operation} on a set $A$ is a ternary operation $m:A\times A\times A\to A$ such that for each $a,b\in A$,
 $$
 m(a,b,b) \;= \; a \qquad \mbox{and} \qquad m(a,a,b) \;=\; b .
 $$
 A \emph{Mal'cev theory} is an algebraic theory which contains a Mal'cev operation. 
\end{defn}

In the theory of groups, there is a Mal'cev operation given by 
$$
m(a,b,c) \; \coloneqq \; a\,b^{-1}\,c.
$$
Therefore any theory whose algebras are groups, with possibly extra structures or properties, is a Mal'cev theory. This includes the theories of groups, abelian groups, rings, commutative rings, and modules over a fixed ring,
but not, for example, the theory of monoids, commutative monoids, semirings, and semimodules over a semiring which is not a ring. 
An example of a Mal'cev theory which is not a theory of particular groups is the theory of \emph{heaps}, closely related to \emph{torsors} (see for example \cite{heaps}).

The following well-known statement is why we are interested in Mal'cev theories, together with the usual correspondence between models of an algebraic theory and the algebras of the associated monad.

\begin{prop}[{e.g.~\cite[Chapter~2]{borceuxbourn}}]
	\label{malceveq}
	An algebraic theory is Mal'cev if and only if every internal reflexive relation in the category of $T$-algebras is an equivalence relation. 
\end{prop}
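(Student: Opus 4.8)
The plan is to prove both implications directly, using the standard dictionary between ternary operations of the theory and elements of the free $T$-algebra $F_3 \coloneqq T\{z_1,z_2,z_3\}$ on three generators, together with the fact that a homomorphism out of a free algebra is determined by the images of the generators.

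\emph{The ``only if'' direction.} Suppose the theory has a Mal'cev operation $m$, and let $R \subseteq A \times A$ be an internal reflexive relation on a $T$-algebra $A$, i.e.\ a $T$-subalgebra containing the diagonal. Since $m$ is an operation of the theory, it acts componentwise on $A \times A$ and hence restricts to $R$. For symmetry: given $(a,b) \in R$, the pairs $(a,a)$, $(a,b)$, $(b,b)$ all lie in $R$ by reflexivity, and applying $m$ componentwise gives $\bigl(m(a,a,b),\, m(a,b,b)\bigr) = (b,a) \in R$. For transitivity: given $(a,b),(b,c) \in R$, apply $m$ componentwise to the triple $(a,b),(b,b),(b,c)$ to get $\bigl(m(a,b,b),\, m(b,b,c)\bigr) = (a,c) \in R$. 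Hence $R$ is an equivalence relation.

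\emph{The ``if'' direction.} Here I must manufacture a Mal'cev term from the hypothesis. Let $F_2 \coloneqq T\{x,y\}$, and consider the two substitution homomorphisms $\alpha,\beta \colon F_3 \to F_2$ determined by
\[
\alpha \colon z_1 \mapsto x,\ z_2 \mapsto y,\ z_3 \mapsto y, \qquad
\beta \colon z_1 \mapsto x,\ z_2 \mapsto x,\ z_3 \mapsto y.
\]
Let $S \coloneqq \im\bigl((\alpha,\beta) \colon F_3 \to F_2 \times F_2\bigr)$, a $T$-subalgebra of $F_2 \times F_2$, i.e.\ an internal relation on $F_2$. It is generated by $(x,x)$, $(y,x)$, $(y,y)$; and since $F_2$ is free on $\{x,y\}$, its diagonal subalgebra is generated by $(x,x)$ and $(y,y)$, so $S$ contains the diagonal and is reflexive. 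By hypothesis $S$ is therefore an equivalence relation, in particular symmetric; since $(y,x) = (\alpha,\beta)(z_2) \in S$, we conclude $(x,y) \in S$. Picking $m \in F_3$ with $(\alpha,\beta)(m) = (x,y)$, i.e.\ $\alpha(m) = x$ and $\beta(m) = y$, and unravelling the definitions of $\alpha$ and $\beta$, this says precisely that $m(x,y,y) = x$ and $m(x,x,y) = y$ hold in $F_2$; since $F_2$ is free, these identities hold in every $T$-algebra, so $m(a,b,b) = a$ and $m(a,a,b) = b$ for all elements, i.e.\ $m$ is a Mal'cev operation.

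I expect the ``if'' direction, and within it the choice of the reflexive relation $S$, to be the main obstacle: one has to hit on feeding the hypothesis the subalgebra of $F_2 \times F_2$ generated by $(x,x),(y,x),(y,y)$ and verify that it is genuinely reflexive, which rests on $F_2$ being free so that its diagonal subalgebra is generated by the images of the generators. Everything else — the componentwise computations in the ``only if'' part, and reading off the term from symmetry of $S$ — is routine bookkeeping. (An essentially equivalent packaging would realize $(\alpha,\beta)$ as $h\colon F_3 \to F_2 \times F_2$ with $h(z_i)$ the $i$-th of $(x,x),(y,x),(y,y)$ and proceed identically.)
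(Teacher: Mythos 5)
Your proof is correct. The paper does not prove \cref{malceveq} at all---it is quoted as a classical fact with a pointer to Borceux--Bourn---and what you have written is precisely the standard argument found there: the componentwise application of $m$ to the triples $(a,a),(a,b),(b,b)$ and $(a,b),(b,b),(b,c)$ for one direction, and for the converse the reflexive subalgebra of $F_2\times F_2$ generated by $(x,x),(y,x),(y,y)$, whose forced symmetry produces the Mal'cev term. All the supporting facts you invoke (images of $T$-algebra homomorphisms are subalgebras, the diagonal of $F_2$ is generated by $(x,x)$ and $(y,y)$, identities verified on free generators hold in all algebras) are sound, so there is nothing to add.
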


Recall that an \emph{internal} relation is one which is compatible with the algebraic operations, or equivalently one in which the relation itself is a model of the theory (or equivalently a $T$-algebra).

Since the partial evaluation relation is an internal reflexive relation (\Cref{pe_internal}), we therefore obtain the following by \Cref{pe_equiv}.

\begin{cor}
	\label{malcev_pev}
	Let $T$ be the monad on $\Set$ associated to a Mal'cev theory. Then every $T$-algebra $A$ is indiscrete, and the partial evaluation relation on $TA$ is an equivalence relation.
\end{cor}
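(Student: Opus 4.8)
The plan is to chain together the three earlier results in a purely formal way, with no new computation beyond the monad and algebra axioms. First I would record that the partial evaluation relation on $TA$ is an \emph{internal} relation by \Cref{pe_internal} (it is generated by the algebra maps $\mu, Te : TTA \to TA$, hence is a $T$-subalgebra of $TA \times TA$), and that it is \emph{reflexive}: for any $t \in TA$, the element $(T\eta)(t) \in TTA$ witnesses a partial evaluation from $t$ to $t$, since $\mu \circ T\eta = \id$ is the right unitality law of $T$, while $(Te) \circ (T\eta) = T(e \circ \eta) = T(\id_A) = \id$ uses the unit axiom of the algebra $(A,e)$. This is already observed in the discussion immediately preceding the statement.

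Next I would invoke \Cref{malceveq}: since $T$ is the monad of a Mal'cev theory, and $T$-algebras are exactly the models of that theory, every internal reflexive relation in the category of $T$-algebras is an equivalence relation. Applying this to the partial evaluation relation on $TA$ — which is internal and reflexive by the previous step — yields at once that it is an equivalence relation, giving the second assertion of the corollary.

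Finally, I would read off indiscreteness from \Cref{pe_equiv}, which states that $A$ is indiscrete if and only if the partial evaluation relation on $TA$ is an equivalence relation; the relevant direction now applies. (If one prefers, one may quote only the ``if'' half of that biconditional.)

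I do not expect any genuine obstacle here: the content is entirely in the cited propositions, and what remains is bookkeeping. The only points deserving a moment's care are conventions — confirming that the notion of ``internal reflexive relation'' in \Cref{malceveq} coincides with the $T$-subalgebra-of-$A\times A$ notion used in \Cref{pe_internal}, and that the usual equivalence between models of the algebraic theory and Eilenberg--Moore algebras of $T$ lets us feed the Mal'cev hypothesis into \Cref{malceveq} — but both are standard and already implicit in the surrounding text.
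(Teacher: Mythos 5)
Your proposal is correct and follows the paper's own argument exactly: the paper likewise notes that the partial evaluation relation is internal and reflexive (citing \Cref{pe_internal}), applies \Cref{malceveq} to conclude it is an equivalence relation, and reads off indiscreteness from \Cref{pe_equiv}. Your explicit verification that $(T\eta)(t)$ is the reflexivity witness (via $\mu\circ T\eta=\id$ and $Te\circ T\eta=T(e\circ\eta)=\id$) is a welcome detail the paper leaves implicit.
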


Note that the converse is not true: for $G$ a group, the theory of $G$-actions is not Mal'cev, since there is no operation of arity two or higher, but the partial evaluation relation is still an equivalence relation (\Cref{group_equiv}).

\subsection{Irreversibility of partial evaluations}\label{irreversibility}

Finally, we consider some conditions on the monad and the algebra which amount to a certain kind of irreversibility of partial evaluations. Recall from \Cref{strictly_pos} that for any algebra $A$ of a strictly positive monad $T$ the square
\[
 \begin{tikzcd}
	A \ar[equals]{d}{} \ar{r}{\eta\eta} & TTA \ar{d}[swap]{\mu} \\
	A \ar{r}{\eta} & TA. 
 \end{tikzcd}
\]
is a pullback. This also has some significance for partial evaluations.

\begin{prop}
	\label{total_terminal}
	A monad $T$ on $\Set$ is strictly positive if and only if for any $T$-algebra $(A,e)$ and $a \in A$, 
	the term $\eta(a)\in TA$ can only be partially evaluted to itself, and the only possible witness is $\eta\eta(a) \in TTA$.
\end{prop}

\begin{proof}
	Straightforward unfolding of definitions.
\end{proof}

Note that the element $\eta\eta(a) \in TTA$ is the canonical reflexivity witness of the partial evaluation from $\eta(a)$ to itself. In particular, \ref{total_terminal} implies that if $\eta(a)$ partially evaluates to any $t \in TA$, then $t = \eta(a)$.

If $T$ is strictly positive and the functor part preserves weak pullbacks, then the following diagram is also a pullback.
 $$
 \begin{tikzcd}
	TA \ar[equals]{d}{} \ar{r}{T(\eta\eta)} & T^3 A \ar{d}[swap]{T\mu} \\
	TA \ar{r}{T\eta} & TTA. 
 \end{tikzcd}
 $$
 In terms of partial evaluations, this square being a pullback means exactly that the identity partial evaluation cannot be expressed as a nontrivial composite (i.e.~it can only be written as the composition of twice itself). To see this, recall that given a composition strategy $\Theta\in T^3 A$, the resulting composite partial evaluation witness is given by $(T\mu)(\Theta)$. Now suppose that for some $t \in TA$ we have $(T\mu)(\Theta) = (T\eta)(t)$, i.e.~the identity partial evaluation at $t$ arises in this way from the composition strategy $\Theta$. Then the pullback condition says that necessarily $\Theta = \left(T(\eta\eta)\right) (\alpha)$.

%

 Finally, the following result shows that strict positivity of a monad and indiscreteness of an algebra rarely come together.

\begin{prop}\label{positive_indiscrete}
	Let $T$ be a strictly positive monad on $\Set$ and let $(A,e)$ be an indiscrete $T$-algebra. Then the partial evaluation relation of $A$ is the identity relation on $TA$ and $e:TA\to A$ is a bijection.
\end{prop}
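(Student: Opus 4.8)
The plan is to reduce the statement to two facts already available: \Cref{pe_equiv}, which (for an indiscrete algebra) identifies the partial evaluation relation on $TA$ with the kernel pair of $e \colon TA \to A$, and \Cref{total_terminal}, which (for a strictly positive monad) says the only partial evaluation witness starting from a trivial expression $\eta(a)$ is $\eta\eta(a)$. Since $e \circ \eta = \id_A$, the map $e$ is automatically a split epimorphism, so the real content is injectivity of $e$; once that is in hand, its kernel pair collapses to the diagonal of $TA$, which is the identity relation, and $e$ becomes a bijection with inverse $\eta$.

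For the injectivity, I would argue as follows. Fix $t \in TA$ and set $a \coloneqq e(t) \in A$. Because $e(\eta(a)) = a = e(t)$, the pair $(\eta(a), t)$ lies in the kernel pair of $e$, which by indiscreteness and \Cref{pe_equiv} is exactly the partial evaluation relation; hence there is a witness $\rho \in T^2 A$ with $\mu(\rho) = \eta(a)$ and $(Te)(\rho) = t$. Now strict positivity of $T$ applies (instantiated at the underlying set of $A$): by \Cref{total_terminal} the only such witness is $\rho = \eta\eta(a)$. Computing its image, naturality of $\eta$ along $e \colon TA \to A$ together with $e \circ \eta = \id_A$ gives
\[
	t \;=\; (Te)(\eta\eta(a)) \;=\; (Te)\bigl(\eta_{TA}(\eta_A(a))\bigr) \;=\; \eta_A\bigl(e(\eta_A(a))\bigr) \;=\; \eta(a) \;=\; \eta(e(t)).
\]
Thus $\eta \circ e = \id_{TA}$, so $e$ is injective, and combined with $e \circ \eta = \id_A$ we conclude that $e \colon TA \to A$ is a bijection with inverse $\eta$. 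Consequently the kernel pair of $e$ — and hence, by \Cref{pe_equiv} again, the partial evaluation relation — is the identity relation on $TA$.

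I do not expect a serious obstacle here: the argument is essentially a bookkeeping combination of the two cited propositions with the unit triangle identity and naturality of $\eta$. The only point requiring a little care is the choice of which expressions to compare — namely realizing that one should test $t$ against its ``trivialization'' $\eta(e(t))$ and then invoke strict positivity on the resulting witness rather than trying to control arbitrary witnesses directly — and checking that strict positivity, stated for all sets, is legitimately being used at the underlying set of the algebra $A$.
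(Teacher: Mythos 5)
Your proposal is correct and follows essentially the same route as the paper: take $t \in TA$ with $a = e(t)$, use indiscreteness (via \Cref{pe_equiv}) to produce a partial evaluation witness from $\eta(a)$ to $t$, and use strict positivity (via \Cref{total_terminal}) to force that witness to be $\eta\eta(a)$, whence $t = \eta(a)$ and $e$ is injective, hence bijective since it is split epi. Your explicit computation of $(Te)(\eta\eta(a)) = \eta(a)$ via naturality of $\eta$ just spells out a step the paper leaves implicit.
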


\begin{proof}
	Let $t\in TA$ be a formal expression, and let $a \coloneqq e(t)$ be its total result. By indiscreteness there exists not only a partial evaluation from $t$ to $\eta(a)$, but also one from $\eta(a)$ to $t$. By strict positivity, the partial evaluation from $\eta(a)$ to $t$ must be an identity, which means that $t=\eta(a)$. This is true for all $t\in TA$, which means in particular that $e$ is injective, and hence (since it is split epi) an isomorphism.
\end{proof}


\section{The bar construction and the quest for its compositional structure}\label{secbar}

Consider again the diagram~\Cref{theta_cube} involving the composition strategy $\Theta$. The three blue arrows which illustrate the partial evaluations indicate that it may be beneficial to think of $\Theta$ as a \emph{triangle} or \emph{2-simplex} in a structure where the elements of $TA$ are vertices, the elements of $T^2 A$ are edges between these vertices representing partial evaluation witnesses, the 2-simplices are composition strategies, etc. We do not need to look far in order to obtain a general definition for what this structure is, since it is well known: the bar construction. We refer to Trimble's exposition~\cite{trimble} for a more extensive treatment of the bar construction and its categorical properties.

\subsection{The bar construction}\label{barconstruction}

Given a monad $(T,\mu,\eta)$ on $\Set$ and a $T$-algebra $(A,e)$, the bar construction gives a free resolution of that algebra in the form of an augmented simplicial set, i.e.~a functor 
\[
	\barc{T}{A} \: : \: \Delta_+^\op \longrightarrow \Set,
\]
defined as follows. On objects, 
\[
	\barc{T}{A}(n) \coloneqq T^{n+1} A
\]
for all ${[n]} \in \Delta_+$, including $n = -1$. Thus an $n$-dimensional simplex in the bar construction is an element of $T^{n+1} A$, i.e.~a formal expression with elements from $A$ and $n+1$ levels of formality. The generating face maps
\[
	d_{n,i} \: : \: T^{n+1} A \longrightarrow T^n A,
\]
are given by $T^n e$ for $i=0$ and by $T^{n-i}\mu$ for $1 \leq i\leq n$, resulting in the diagram
$$
\begin{tikzcd}
	\cdots \quad T^4 A \arrow[shift left=7.5]{r}{T^3 e} \arrow[shift left=2.5]{r}{T^2 \mu} \arrow[shift right=2.5]{r}{T \mu} \arrow[shift right=7.5]{r}{\mu} & T^3 A \arrow[shift left=5]{r}{T^2 e} \arrow{r}{T \mu} \arrow[shift right=5]{r}{\mu} & T^2 A \arrow[shift left=2.5]{r}{T e} \arrow[shift right=2.5]{r}{\mu} & T A \arrow{r}{e} & A.
\end{tikzcd}
$$
The degeneracy maps
\[
	s_{n,i} \: : \: T^{n+1}A \longrightarrow T^{n+2}A
\]
are given by $T^{n-i+1}\eta$ for $0\leq i\leq n$, resulting in the diagram
$$
\begin{tikzcd}
	\cdots \quad T^4 A  & T^3 A \arrow[shift left=5,swap]{l}{T^{3}\eta} \arrow[swap]{l}{T^2\eta} \arrow[shift right=5,swap]{l}{T\eta} & T^2 A \arrow[shift left=2.5,swap]{l}{T^2\eta} \arrow[shift right=2.5,swap]{l}{T\eta} & T A \arrow[swap]{l}{T\eta} & A.
\end{tikzcd}
$$
Taken together, the face and degeneracy maps define the augmented simplicial set $\barc{T}{A} : \Delta_+^\op \to \Set$. Its restriction to $\Delta$ is a simplicial set which we also denote $\barc{T}{A}$ by abuse of notation; throughout the paper $\barc{T}{A}$ will refer to the latter since the augmentation plays no role for us.

\begin{remark}
	\label{cartesian_nerve}
	It is well-known that if $T$ is a cartesian monad, then $\barc{T}{A}$ is the nerve of a category.\footnote{This seems to be a folklore observation for which the earliest occurrence that we know of is a comment by Trimble on the n-Category Caf\'e blog at \href{https://golem.ph.utexas.edu/category/2007/05/on_the_bar_construction.html\#c009955}{https://golem.ph.utexas.edu/category/2007/05/on{\textunderscore}the{\textunderscore}bar{\textunderscore}construction.html\#c009955}. See also~\cite[Proposition~4.4.1]{weber_classifiers} for a more general statement.}

	Indeed if we assume merely that $\mu$ is strongly cartesian, then the following diagram already proves the Segal condition $X_n \cong X_1 \times_{X_0} \scdots{n} \times_{X_0} X_1$ for $X = \barc{T}{A}$, making it into the nerve of a category.
\begin{center}\begin{tikzcd}[column sep=.1cm,row sep=scriptsize]
	 & & & & & T^{n+1}A \arrow{dl}[swap]{\mu} \arrow{dr}{T^n e} & & & & & \\
	 & & & & T^n A \arrow{dl}[swap]{\mu} \arrow{dr}{T^{n-1} e} & & T^n A \arrow{dl}[swap]{\mu} \arrow{dr}{T^{n-1} e} & & & & \\
	 & & & T^{n-1} A \arrow{dl}[swap]{\iddots} \arrow{dr}{\ddots} & & T^{n-1} A \arrow{dl}[swap]{\iddots} \arrow{dr}{\ddots} & & T^{n-1} A \arrow{dl}[swap]{\iddots} \arrow{dr}{\ddots} & & & \\
	 & & T^3 A \arrow{dl}[swap]{\mu} \arrow{dr}{T^2 e} & & \cdots \arrow{dl}[swap]{\mu} \arrow{dr}{T^2 e} & & \cdots \arrow{dl}[swap]{\mu} \arrow{dr}{T^2 e} & &  T^3 A \arrow{dl}[swap]{\mu} \arrow{dr}{T^2 e} & & \\
	 & T^2 A \arrow{dl}[swap]{\mu} \arrow{dr}{T e} & & T^2 A \arrow{dl}[swap]{\mu} \arrow{dr}{T e} & & \cdots \arrow{dl}[swap]{\mu} \arrow{dr}{T e} & & T^2 A \arrow{dl}[swap]{\mu} \arrow{dr}{T e} & & T^2 A \arrow{dl}[swap]{\mu} \arrow{dr}{T e} & \\
	 TA & & TA & & TA & \cdots & TA & & TA & & TA \\
\end{tikzcd}\end{center} 
Each square in the diagram is a naturality square for $\mu$, hence a pullback, and any cone over the bottom two rows of the diagram induces unique maps to each subsequent row moving upwards. $\barc{T}{A}$ is thus the nerve of a category for any monad $T$ with $\mu$ strongly cartesian.
\end{remark}

\begin{ex}
	\label{monoid_action_bar}
	More concretely, for a monoid $M$ let $M \times -$ be the $M$-set monad. For an $M$-set $A$, the bar construction $\barc{M \times -}{A}$ is the nerve of a category. A straightforward unfolding of the definition of the first few levels of the bar construction shows that this category has pairs $(x,a) \in M \times A$ as objects, with morphisms $(x,a) \to (y,b)$ corresponding to the monoid elements $z \in M$ satisfying $x = yz$ and $b = za$, and composing by multiplication.
\end{ex}

\begin{ex}
	\label{terminal_monoid_bar}
	Let $T$ be the monoid monad and $1$ the trivial one-element monoid. Then $\barc{T}{1}$ is the nerve of the augmented simplex category $\Delta_+$, for example by~\cite[Corollary~7.2.1]{batanin}.
\end{ex}

\begin{remark}
	If $T$ is a cartesian monad and $A$ a $T$-algebra with cartesian algebra square, then $\barc{T}{A}$ is even the nerve of an equivalence relation, namely of the kernel pair of $e : TA \to A$ as in \Cref{pe_equiv}.
 This is because by definition, a $T$-algebra $(A,e)$ is cartesian if and only if the parallel pair
$$
\begin{tikzcd}
 TTA \ar[shift left]{r}{\mu} \ar[shift right]{r}[swap]{Te} & TA 
\end{tikzcd}
$$
is a kernel pair of $e$. 
\end{remark}

Based on the definition of the bar construction, it is immediate that a 1-simplex in $\barc{T}{A}$ from a vertex $t_0 \in TA$ to a vertex $t_1 \in TA$ is the same thing as a partial evaluation witness from $t_0$ to $t_1$. Moreover for $t_0, t_1, t_2 \in TA$ and partial evaluation witnesses $\tau_{01}, \tau_{12} \in TTA$, the composition strategies $\Theta$ (\Cref{composition}) are exactly those 2-simplices in $\barc{T}{A}$ whose face obtained by deleting vertex 2 is $\tau_{01}$, and whose face obtained by deleting vertex 0 is $\tau_{12}$. In the parlance of quasicategory theory, finding such a $\Theta$ for given $\tau_{01}$ and $\tau_{12}$ hence amounts to \emph{filling an inner 2-horn}~\cite{joyal}.

This motivates our quest of trying to understand to what extent the bar construction, considered as a simplicial set, can be thought of as a higher compositional structure. As we have seen in the previous section, all inner 2-horns can be filled if $T$ is a BC monad; on the other hand, for a general monad $T$ and two composable 1-simplices in the bar construction of a $T$-algebra, there may not even be a third 1-simplex pointing directly from the source of the first to the target of the second (\Cref{pe_not_transitive}). This theme will continue throughout the rest of the paper: we will find good compositionality properties for the bar construction as long as $T$ satisfies suitable lifting conditions, but not in general.

\subsection{Notation for higher simplices in the bar construction}
\label{trees_and_boxes}

With the commutative-monoid monad on $\Set$ serving as a recurring example in what follows, we now give a more explicit description of its bar construction in some detail, in particular making precise the idea that higher levels of formality correspond to iterated ``bracketing'' or ``boxing'' of expressions. Although this description applies in very much the same way to all monads coming from symmetric operads (and similarly from non-symmetric operads), we focus on the commutative-monoid monad for simplicity, leaving the treatment of other cases to the reader.

Variants of the following considerations are very well known in operad theory. Nevertheless, we surprisingly have not found any reference containing the relevant statements in the precise form that we need, and we therefore offer our own detailed exposition in what follows. Similar considerations in somewhat different contexts can be found e.g.~in works of Ching~\cite[Section~4]{ching} or Kock~\cite[Section~2.3.5]{kock}.

Let $T : \Set \to \Set$ be the commutative-monoid monad. For any $X \in \Set$, the set $TX$ is the set of finite multisubsets of $X$, or equivalently of finitely supported maps $X \to \N$. We can thus identify the elements of $TX$ with non-planar rooted trees of height at most $1$, where the leaves are labelled by elements of $X$. The tree consisting only of the root then corresponds to the neutral element $0 \in TX$ representing the empty multiset.

Now upon applying $T$ multiple times, it follows that $T^n X$ for $n \in \N$ can be identified with the set of non-planar rooted trees of height at most $n$ and with leaves at depth $n$ labelled by elements of $X$; see also the literature on \emph{operadic trees} for more explanation~\cite[Section~1.5]{kock}. For example for $n = 2$, a typical element of $T^2 X$ is represented by a tree that looks like
\begin{equation}
	\label{tree_example}
	\begin{tikzcd}
		& & & \bullet \ar[-]{dll} \ar[-]{dl} \ar[-]{d} \ar[-]{dr} \\
		& \bullet \ar[-]{dl} \ar[-]{dr} & \bullet & \bullet & \bullet \ar[-]{d} \\
		a & & b & & c
	\end{tikzcd}
\end{equation}
for some $a,b,c \in X$. In terms of multiset notation, we could also denote this element of $T^2 X$ as
\[
	\{ \{a, b\}, \{ \}, \{ \}, \{c\}\}.
\]
But since this type of expression can get cumbersome to write, we equivalently use boxed expressions such as
\[
	\boxed{\boxed{a} + \boxed{b}} + \boxed{\bullet} + \boxed{\bullet} + \boxed{\boxed{c}},
\]
were the tree structure is now encoded in the boxing, so that each level of boxing represents a level of formality. The fact that the trees are non-planar now means that it is understood that the individual summands within a box can be arranged arbitrarily but not moved across box boundaries. We denote unlabelled leaves by $\bullet$, as in the tree diagrams. These correspond to the neutral element $0 \in T^k X$ when there are $n - k$ levels of boxing around $\bullet$. The elements of $T^n X$ hence are identified with equivalence classes---with respect to the commutative monoid laws---of boxed expressions with up to $n$ levels of boxing and elements of $X$ at exactly level $n$. In particular, elements of $TX$ already have one level of boxing.

In terms of the trees or boxed expressions picture, we then have the following:
\begin{itemize}
	\item For a function $f : X \to Y$, the induced map $T^n f : T^n X \to T^n Y$ takes such a tree with bottom leaves labelled in $X$ and replaces these labels by applying $f$ to each of them, and similarly in the boxed expressions notation.

	\item The multiplication $\mu : T^2 X \to T X$, which takes a multiset of multisets and maps it to their multiset union, takes a tree of height at most $2$ and removes all nodes at level $1$, leaving their child nodes in place and connecting them up directly with the root node. More generally, $T^k \mu : T^{n+1}X \to T^n X$ for $k = 0,\ldots,n-1$, takes a tree of height at most $n+1$, removes all nodes at depth $k+1$, and similarly connects their children to their parent nodes.

	\item The unit $\eta : X \to TX$ takes an element of $X$ to the corresponding singleton multiset $\{x\}$. Hence in the tree picture, $T^k \eta : T^n X \to T^{n+1} X$ for $k = 0,\ldots,n$ replaces every node at depth $k$ by a pair of nodes, one at depth $k$ and one at depth $k+1$, such that the latter is the only child node of the former.
\end{itemize}

We thus have all the tools in hand to do concrete computations in the bar construction of the commutative-monoid monad: we perform them on the corresponding trees, while usually using boxed expression notation for these trees.

\begin{remark}
	\label{terms_decrease}
	The commutative-monoid monad has the following convenient property, which is obvious from the trees picture: if $t_0 \in TA$ partially evaluates to $t_1 \in TA$, then the number of terms in $t_1$ is at least as large as the number of terms in $t_0$, with equality if and only if $t_0 = t_1$. (Here, the number of terms can be conveniently defined by applying the functor $T$ to the map $A \to 1$ and composing with the obvious isomorphism $T1 \cong \N$.)
\end{remark}

\subsection{Nonuniqueness of composite partial evaluations}

Our first question concerns the uniqueness of composition strategies. Upon composing two partial evaluation witnesses using composition strategies, is the resulting composite partial evaluation witness well defined, i.e.~independent of the choice of composition strategy? Equivalently, is $\barc{T}{A}$ such that fillers for inner $2$-horns have unique third faces? This is not the case:

\begin{thm}\label{nonuniqueness}
	There is a finitary weakly cartesian monad $T$ on $\mathsf{Set}$ together with a $T$-algebra $A$ for which $\barc{T}{A}$ contains an inner $2$-horn with two different fillers such that their outer $1$-faces are also different.
\end{thm}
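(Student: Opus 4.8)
The plan is to exhibit an explicit example using the commutative monoid monad $T$, which is finitary and weakly cartesian (\Cref{operads_etc}), together with the terminal algebra $A = \{\ast\}$. For this choice the bar construction has a very concrete model: since $TA \cong \N$, one identifies $T^2 A$ with the set of finite multisets of natural numbers and $T^3 A$ with the set of finite multisets of such multisets, and under these identifications the relevant face maps of $\barc{T}{A}$ become transparent. For a $1$-simplex $\tau \in T^2 A$, the source $d_1 \tau = \mu(\tau)$ is the sum of the multiset and the target $d_0 \tau = (Te)(\tau)$ is its cardinality. For a $2$-simplex $\Theta = \{M, N\} \in T^3 A$ one has $d_2 \Theta = \mu(\Theta) = M \sqcup N$ (the multiset union of the components), $d_0 \Theta = (T^2 e)(\Theta) = \{|M|, |N|\}$ (the multiset of cardinalities of the components), and $d_1 \Theta = (T\mu)(\Theta)$ = the multiset of the sums of the components. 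These reduce the argument to elementary bookkeeping; alternatively one may phrase everything in the box notation of \Cref{trees_and_boxes}.

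First I would fix the horn. Take the $1$-simplices $\tau_{01} = \{1,2,3\}$ and $\tau_{12} = \{1,2\}$ in $T^2 A$; then $\tau_{01}$ is a partial evaluation from $t_0 = 6$ to $t_1 = 3$ and $\tau_{12}$ one from $t_1 = 3$ to $t_2 = 2$, so $(\tau_{01},\tau_{12})$ assembles into an inner $2$-horn $\Lambda^2_1 \to \barc{T}{A}$. This horn does admit fillers by \Cref{pe_transitive}; the point is their non-uniqueness.

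Next I would identify the fillers. A $2$-simplex $\Theta = \{M,N\} \in T^3 A$ fills this horn exactly when $M \sqcup N = \{1,2,3\}$ (so that $d_2 \Theta = \tau_{01}$) and $\{|M|,|N|\} = \{1,2\}$ (so that $d_0 \Theta = \tau_{12}$), i.e.\ exactly when $\{M, N\}$ splits $\{1,2,3\}$ into a singleton and a pair. There are three such fillers; taking $\Theta = \{\{1\},\{2,3\}\}$ and $\Theta' = \{\{3\},\{1,2\}\}$, their outer faces are
\[
	d_1 \Theta \;=\; \{1, 5\} \;\neq\; \{3,3\} \;=\; d_1 \Theta',
\]
so these two distinct fillers of the horn have distinct outer $1$-faces, which proves the theorem.

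The genuine difficulty is not the verification --- which is the routine unwinding above --- but locating such an example in the first place; as the authors note for the analogous situation of \Cref{pe_not_transitive}, this is surprisingly delicate, essentially because the commutativity (multiset) structure tends to collapse the putative ambiguity. The example is engineered so that $t_1$ has several summands of equal value that nevertheless arise from genuinely different finer groupings recorded by $\tau_{01}$, while $\tau_{12}$ treats these equal-valued summands asymmetrically (here, one is carried over intact to $t_2$ and another is merged into a larger summand); it is precisely the ambiguity in matching up equal-valued summands that produces distinct composites $d_1 \Theta$. If a more ``readable'' instance is wanted, one can replace $A$ by a free commutative monoid and attach distinct labels to the $t_0$-summands, but the terminal algebra already suffices.
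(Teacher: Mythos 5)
Your proposal is correct and follows essentially the same route as the paper's proof: an explicit inner $2$-horn in the bar construction of the commutative monoid monad whose fillers correspond to the different ways of regrouping the summands of the $d_2$-face compatibly with the $d_0$-face, distinguished by their $d_1$-faces. The only difference is the choice of algebra --- you use the terminal algebra, where everything reduces to cardinality/sum bookkeeping on multisets of naturals, whereas the paper uses $(\N,+)$ --- and your computations check out: $d_1\{\{1\},\{2,3\}\} = \{1,5\} \neq \{3,3\} = d_1\{\{3\},\{1,2\}\}$.
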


Note that this is a phenomenon which cannot occur in the nerve of a category.

\begin{proof}
	Again we construct a concrete example, this time with the commutative-monoid monad $T$ and the $T$-algebra $A \coloneqq (\N,+)$. Consider the elements of $TTA$ given by
	\begin{align*}
 		\alpha \;& \coloneqq\; \boxed{ \boxed{2} + \boxed{2} } 
	  	+ \boxed{ \boxed{3} + \boxed{3} }
		+ \boxed{ \boxed{3} + \boxed{1} }, \\
		\beta \;&\coloneqq\; \boxed{\boxed{4} + \boxed{6}} + \boxed{\boxed{4}} . 
	\end{align*}
	These form an inner $2$-horn, because
	$$
	(Te)(\alpha) = \mu(\beta) = \boxed{4} + \boxed{6} + \boxed{4} .
	$$
	This horn admits two distinct fillers given by
	\begin{align*}
		 \delta \;&\coloneqq\; \boxed{ \boxed{ \boxed{2} + \boxed{2} } 
		  + \boxed{ \boxed{3} + \boxed{3} } }
		  + \boxed{ \boxed{ \boxed{3} + \boxed{1} } }, \\[8pt]
		 \delta' \;&\coloneqq\; \boxed{ \boxed{ \boxed{2} + \boxed{2} } }
		  + \boxed{ \boxed{ \boxed{3} + \boxed{3} } 
		  + \boxed{ \boxed{3} + \boxed{1} } }.
	\end{align*}
	Indeed, removing the outer boxes easily gives $\mu(\delta)=\mu(\delta')=\alpha$, while removing the inner boxes shows that $(T^2e)(\delta)=(T^2e)(\delta')=\beta$, proving that both $\delta$ and $\delta'$ fill the horn. The resulting outer 1-faces arise by removing the intermediate level of boxing,
	\begin{align*}
		 (T\mu)(\delta) \;&=\; \boxed{  \boxed{2} + \boxed{2} 
		  +  \boxed{3} + \boxed{3}  }
		  + \boxed{  \boxed{3} + \boxed{1}  }, \\[8pt]
		 (T\mu)(\delta') \;&=\; \boxed{  \boxed{2} + \boxed{2}  }
		  + \boxed{  \boxed{3} + \boxed{3} 
		  +  \boxed{3} + \boxed{1}  },
	\end{align*}
	which are indeed distinct parallel 1-cells. 
\end{proof}


\subsection{Non-fillable inner horns}
\label{unfillable_horns}

Since inner 2-horns in the bar construction have fillers for BC monads, it is natural to ask whether inner horns have fillers in general under suitable assumptions on the monad. By \Cref{cartesian_nerve}, this is clearly the case for $\barc{T}{A}$ whenever $T$ is a cartesian monad since the nerve of a category trivially has all inner horn fillers. In the following, we will consider inner 3-horns; these come in the following two kinds.

An inner 3-horn of the first kind in a bar construction consists of three 2-simplices $\alpha,\gamma,\delta\in T^3A$ satisfying the equations
\begin{equation}
	\label{3horn1eq}
	(T\mu)(\alpha)=\mu(\gamma),\qquad (T^2e)(\alpha)=\mu(\delta),\qquad (T^2e)(\gamma)=(T^2e)(\delta).
\end{equation}
A filler is then an element $\varepsilon\in T^4A$ which recovers the given 2-simplices via
\[
	\alpha = \mu(\varepsilon), \qquad \gamma = (T^2\mu)(\varepsilon), \qquad \delta = (T^3e)(\varepsilon).
\]
In other words, given compatible elements $\alpha,\gamma,\delta$ of the red part of the diagram
\begin{equation}
\label{3horn1}
	\begin{tikzcd}
		\textcolor{blue}{(\varepsilon \in)\: T^4 A} \ar[blue]{rr}{T^3e} \ar[blue]{dd}[swap]{\mu} \ar[blue]{dr}[swap]{T^2\mu} &	& \textcolor{red}{(\delta\in)\: T^3A} \ar[red]{dd}[near end]{\mu} \ar[red]{dr}{T^2e} \\
		& \textcolor{red}{(\gamma \in)\: T^3A} \ar[red,crossing over]{rr}[near end, swap]{T^2e} &	& T^2 A \ar{dd}{\mu}	\\
		\textcolor{red}{(\alpha\in)\: T^3A} \ar[red]{dr}[swap]{T\mu} \ar[red]{rr}[swap, near end]{T^2e} &		& T^2 A \ar{dr}{Te}	\\
		& T^2 A \ar{rr}[swap]{Te}	\ar[red,crossing over,from=uu,near end, "\mu"]&			& TA
	\end{tikzcd}
\end{equation}
there is a common lift $\varepsilon$ along the blue arrows.

An inner 3-horn of the second kind consists of three 2-simplices $\alpha,\beta,\delta\in T^3A$ satisfying the equations
\[
	\mu(\alpha)=\mu(\beta), \qquad (T^2e)(\alpha)=\mu(\delta), \qquad (T^2e)(\beta)=(T\mu)(\delta).
\]
A filler is then an element $\varepsilon\in T^4A$ which recovers the given 2-simplices via
\[
	\mu(\varepsilon)=\alpha, \qquad (T\mu)(\varepsilon)=\beta, \qquad (T^3e)(\varepsilon)=\delta.
\]
In other words, given compatible elements $\alpha,\beta,\delta$ of the red part of the diagram
\begin{equation}
	\label{3horn2}
	\begin{tikzcd}
		\textcolor{blue}{(\varepsilon \in)\: T^4 A} \ar[blue]{rrrr}{T^3e} \ar[blue]{dddd}[swap]{T\mu} \ar[blue]{dr}[swap]{\mu} & & & & \textcolor{red}{(\delta\in)\: T^3A} \ar[red]{dl}{\mu} \ar[red]{dddd}{T\mu} \\
		& \textcolor{red}{(\alpha\in)\: T^3A} \ar[red]{dd}{\mu} \ar[red]{rr}{T^2e} & & T^2 A \ar{dd}{\mu}	\\
		\\
		& T^2 A \ar{rr}{Te} & & TA \\
		\textcolor{red}{(\beta\in)\: T^3A} \ar[red]{rrrr}[swap]{T^2e} \ar[red]{ur}{\mu} & & & & T^2 A \ar{ul}[swap]{\mu} \\
	\end{tikzcd}
\end{equation}
there is a common lift $\varepsilon$ along the blue arrows. Here, we have drawn the above diagrams in this form since they both appear in roughly their respective shape as subdiagrams of the hypercube
\[
	\begin{tikzpicture}
		\tikzset{nodestyle/.style={inner sep = 3pt, align = center}};
		\node[nodestyle]at (0,0) (A) {$T^2A$};
		\node[nodestyle]at (10,0) (B) {$TA$};
		\node[nodestyle]at (10,10) (C) {$T^2A$};
		\node[nodestyle]at (0,10) (D) {$T^3A$};
		\node[nodestyle]at (-2,2) (A') {$T^3A$};
		\node[nodestyle]at (8,2) (B') {$T^2A$};
		\node[nodestyle]at (8,12) (C') {$T^3A$};
		\node[nodestyle]at (-2,12) (D') {$T^4A$};
		\draw[thick,->](A')-- node[black, below]{\tiny $T^2e$}(B');
		\draw[thick,->](C')-- node[black, right]{\tiny $T\mu$}(B');
		\draw[thick,->](D')-- node[black, above]{\tiny $T^3e$}(C');
		\draw[thick,->](D')-- node[black, left]{\tiny $T\mu$}(A');
		\node[nodestyle]at (1,4) (a') {$T^2A$};
		\node[nodestyle]at (5,4) (b') {$TA$};
		\node[nodestyle]at (5,7.8) (c') {$T^2A$};
		\node[nodestyle]at (1,7.8) (d') {$T^3A$};
		\node[nodestyle]at (2.2,2.5) (a) {$TA$};
		\node[nodestyle]at (6,2.5) (b) {$A$};
		\node[nodestyle]at (6,6.5) (c) {$TA$};
		\node[nodestyle]at (2.2,6.5) (d) {$T^2A$};
		\draw[->](a')-- node[gray, above]{\tiny $Te$}(b');
		\draw[->](c')-- node[gray, left]{\tiny $\mu$}(b');
		\draw[->](d')-- node[gray, above]{\tiny $T^2e$}(c');
		\draw[->](d')-- node[gray, right]{\tiny $\mu$}(a');
		\draw[white,line width=7, -](d) -- (c);
		\draw[white,line width=7, -](d) -- (a);
		\draw[->](a)-- node[gray, above]{\tiny $e$}(b);
		\draw[->](c)-- node[gray, right]{\tiny $e$}(b);
		\draw[->](d)-- node[gray, above]{\tiny $Te$}(c);
		\draw[->](d)-- node[gray, left]{\tiny $\mu$}(a);
		\draw[->](A')-- node[gray, above]{\tiny $\mu$}(a');
		\draw[->](B')-- node[gray, above]{\tiny $\mu$}(b');
		\draw[->](C')-- node[gray, below]{\tiny $\mu$}(c');
		\draw[->](D')-- node[gray, below]{\tiny $\mu$}(d');
		\draw[white,line width=4, -](A) -- (a);
		\draw[white,line width=4, -](B) --(b);
		\draw[white,line width=4, -](C) -- (c);
		\draw[->](A)-- node[gray, below]{\tiny $\mu$}(a);
		\draw[->](B)-- node[gray, below]{\tiny $e$}(b);
		\draw[->](C)-- node[gray, above]{\tiny $\mu\ \ \ \ \ $}(c);
		\draw[->](D)-- node[gray, above]{\tiny $\mu$}(d);
		\draw[thick,->](A')-- node[black, below]{\tiny $T\mu$}(A);
		\draw[thick,->](B')-- node[black, above]{\tiny $Te$}(B);
		\draw[thick,->](C')-- node[black, above]{\tiny $\ \ \ T^2e$}(C);
		\draw[thick,->](D')-- node[black, above]{\tiny $\ \ T^2\mu$}(D);
		\draw[->](a')-- node[gray, below]{\tiny $\mu\ $}(a);
		\draw[->](b')-- node[gray, above]{\tiny $e\ $}(b);
		\draw[->](c')-- node[gray, above]{\tiny $\ \ Te$}(c);
		\draw[->](d')-- node[gray, below]{\tiny $T\mu\ \ $}(d);
		\draw[blue!20!white,line width=5, opacity=0.5](a) -- (b');
		\draw[blue,->](a)--(b');
		\draw[blue!20!white,line width=5, opacity=0.5](b') -- (c);
		\draw[blue,->](b')--(c);
		\draw[white,line width=7, -](c) -- (B);
		\draw[blue!20!white,line width=5, opacity=0.5](c) -- (B);
		\draw[blue,->](c) -- (B);
		\draw[white,line width=7, -](a) -- (c);
		\draw[green!20!white,line width=5, opacity=0.5](a) -- (c);
		\draw[blue,->](a)--(c);
		\draw[green!20!white,line width=5, opacity=0.5](b') -- (B);
		\draw[blue,->](b')--(B);
		\draw[white,line width=7, -](a) -- (B);
		\draw[green!20!white,line width=5, opacity=0.5](a) -- (B);
		\draw[blue,->](a)--(B);
		\draw[white,line width=7, -](D) -- (C);
		\draw[white,line width=7, -](D) -- (A);
		\draw[thick,->](A)-- node[black, below]{\tiny $Te$}(B);
		\draw[thick,->](C)-- node[black, right]{\tiny $Te$}(B);
		\draw[thick,->](D)-- node[black, above]{\tiny $T^2e$}(C);
		\draw[thick,->](D)-- node[black, left]{\tiny $T\mu$}(A);
	\end{tikzpicture}
\]
where the simplices of the bar construction can be conveniently visualized in terms of the schematic blue tetrahedron, extending \Cref{theta_cube} by one dimension.

\begin{thm}
	\label{no_horn_fillers}
	There is a finitary weakly cartesian monad $T$ on $\mathsf{Set}$ together with a $T$-algebra $A$ for which $\barc{T}{A}$ contains one of each kind of inner 3-horn without a filler.
\end{thm}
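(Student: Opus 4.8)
The plan is to produce both counterexamples inside a single bar construction — that of the commutative monoid monad $T$, which is weakly cartesian by \Cref{operads_etc}, on the algebra $A \coloneqq (\N,+)$ — with all simplices written in the tree/box notation of \Cref{trees_and_boxes}. There a $3$-simplex $\varepsilon\in T^{4}A$ is a non-planar rooted tree of height $\le 4$ whose depth-$4$ leaves carry natural numbers, and its four faces act as: $\mu(\varepsilon)$ merges the depth-$1$ nodes into the root (recording only the multiset of depth-$2$ subtrees and forgetting which depth-$1$ node each sits under); $T\mu(\varepsilon)$ merges the depth-$2$ nodes into their depth-$1$ parents; $T^{2}\mu(\varepsilon)$ merges the depth-$3$ nodes into their depth-$2$ parents; and $T^{3}e(\varepsilon)$ keeps the depth-$1$/$2$/$3$ skeleton but relabels each depth-$3$ node by the sum of its leaves. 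Under this dictionary the horn equations of \Cref{3horn1eq} and their second-kind analogues become finite multiset identities that are routine to verify.

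The non-existence of a filler rests on a rigidity. Call the multiset of depth-$3$ sums below a depth-$2$ node its \emph{sum-profile}. If the depth-$2$ subtrees named by $\mu(\varepsilon)$ have pairwise distinct sum-profiles, then $\mu(\varepsilon)$ and $T^{3}e(\varepsilon)$ together determine $\varepsilon$ uniquely, because $T^{3}e(\varepsilon)$ prescribes, for each depth-$1$ node, the multiset of sum-profiles of its depth-$2$ children, and distinctness turns this into an unambiguous assignment of depth-$2$ subtrees to depth-$1$ nodes. So for a first-kind horn $\big(\alpha=\mu(\varepsilon),\ \gamma=T^{2}\mu(\varepsilon),\ \delta=T^{3}e(\varepsilon)\big)$ in which $\alpha$'s depth-$2$ subtrees have pairwise distinct sum-profiles, the pair $(\alpha,\delta)$ pins down the unique candidate filler $\varepsilon$, and it only remains to choose $\gamma$ to be a \emph{legitimate but wrong} third face: one satisfying the shared-edge conditions $T\mu(\alpha)=\mu(\gamma)$ and $T^{2}e(\gamma)=T^{2}e(\delta)$ yet grouping the depth-$2$ nodes' leaf-multisets into depth-$1$ boxes differently from how $\delta$ groups their sum-profiles, so that $T^{2}\mu(\varepsilon)\neq\gamma$. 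A concrete choice takes the four depth-$2$ subtrees to be $\boxed{\boxed{2}}$, $\boxed{\boxed{3}}$, $\boxed{\boxed{1}}+\boxed{\boxed{2}}$, $\boxed{\boxed{1}}+\boxed{\boxed{1}}$, whose sum-profiles $\boxed{2}$, $\boxed{3}$, $\boxed{1}+\boxed{2}$, $\boxed{1}+\boxed{1}$ are pairwise distinct but pair up by total ($2$ with $2$, $3$ with $3$); then $\delta$ assigns $\boxed{\boxed{2}},\boxed{\boxed{3}}$ to one depth-$1$ node and $\boxed{\boxed{1}}+\boxed{\boxed{2}},\boxed{\boxed{1}}+\boxed{\boxed{1}}$ to the other, whereas $\gamma$ encodes the grouping in which one depth-$1$ node collects the leaf-multisets $\boxed{2},\boxed{1}+\boxed{2}$ and the other collects $\boxed{3},\boxed{1}+\boxed{1}$. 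The equal-total coincidences are exactly what keep this $\gamma$ satisfying $T^{2}e(\gamma)=T^{2}e(\delta)$, so a genuine inner horn results.

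For the second kind $\big(\alpha=\mu(\varepsilon),\ \beta=T\mu(\varepsilon),\ \delta=T^{3}e(\varepsilon)\big)$, with missing face $T^{2}\mu$, the same distinctness again forces $\varepsilon$ from $(\alpha,\delta)$; one now picks $\beta$ — which records, per depth-$1$ node, the merged multiset of all its depth-$3$ subtrees' leaf-multisets — arising from a leaf-multiset-preserving swap between the two depth-$1$ groups, e.g.\ exchanging a subtree contributing a leaf $\boxed{2}$ for one contributing a leaf-multiset $\boxed{1}+\boxed{1}$ of the same total, so that $\beta$ still passes its shared-edge tests $\mu(\alpha)=\mu(\beta)$ and $T^{2}e(\beta)=T\mu(\delta)$ but differs from $T\mu$ of the forced $\varepsilon$. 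The routine residue is then checking the three horn equations in each example and computing the one forced filler-candidate to see that its face $T^{2}\mu$ (resp.\ $T\mu$) disagrees with $\gamma$ (resp.\ $\beta$). The delicate step — which, just as the authors remark of the closely related \Cref{pe_not_transitive}, is "surprisingly tricky" — is producing horn data in the first place: the equations are restrictive enough that naive attempts are fillable, because the depth-$1$ grouping is usually not over-determined; the two devices above, distinct sum-profiles to rigidify $\delta$ and an equal-total coincidence to supply a second inequivalent grouping that the remaining face can witness, are what pry those two constraints apart.
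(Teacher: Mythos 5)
Your strategy is genuinely different from the paper's, and for the first kind of horn it works. The paper's proof establishes something stronger --- that the \emph{missing $2$-face} cannot exist at all --- and detects this with the monotone invariant of \Cref{terms_decrease} (a nontrivial partial evaluation strictly decreases the number of terms). You instead allow all four $2$-faces to exist and argue that the three given ones over-determine the filler: $(\alpha,\delta)$ pin down a unique candidate $\varepsilon$ via the distinct-sum-profile rigidity, and the third given face contradicts that candidate's remaining image. I checked your first-kind example: with
$\varepsilon_0=\boxed{\boxed{\boxed{\boxed{2}}}+\boxed{\boxed{\boxed{3}}}}+\boxed{\boxed{\boxed{\boxed{1}}+\boxed{\boxed{2}}}+\boxed{\boxed{\boxed{1}}+\boxed{\boxed{1}}}}$, the faces $\alpha=\mu(\varepsilon_0)$, $\delta=(T^3e)(\varepsilon_0)$ and the regrouped $\gamma=\boxed{\boxed{\boxed{2}}+\boxed{\boxed{1}+\boxed{2}}}+\boxed{\boxed{\boxed{3}}+\boxed{\boxed{1}+\boxed{1}}}$ satisfy all three equations of \Cref{3horn1eq} (both $(T^2e)(\gamma)$ and $(T^2e)(\delta)$ equal $\boxed{\boxed{2}+\boxed{3}}+\boxed{\boxed{2}+\boxed{3}}$), the rigidity forces $\varepsilon=\varepsilon_0$, and $(T^2\mu)(\varepsilon_0)\neq\gamma$. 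That half is sound, and the uniqueness lemma is an attractive alternative to the term-counting argument.

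The second-kind example, however, is not actually supplied, and the natural reading --- reuse the same $\alpha,\delta$ --- fails. Your swap needs a depth-$3$ node with leaf-multiset $\boxed{1}+\boxed{1}$ lying in a different depth-$1$ group from one with leaf-multiset $\boxed{2}$; but in the tree above every depth-$3$ node carries a singleton leaf-multiset, so no such swap exists, and in fact the constraints $\mu(\beta)=\mu(\alpha)$ and $(T^2e)(\beta)=(T\mu)(\delta)$ there force $\beta=(T\mu)(\varepsilon_0)$ (the only available leaf-multiset of sum $3$ is $\boxed{3}$, and the two copies of $\boxed{2}$ are indistinguishable), so that horn \emph{is} fillable. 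There is moreover a genuine tension between your two devices: if $\boxed{1}+\boxed{1}$ is the sole leaf-multiset under its depth-$2$ parent, that parent's sum-profile is $\boxed{2}$, colliding with the sum-profile of $\boxed{\boxed{2}}$ and destroying the rigidity you rely on. The gap is fixable --- e.g.\ take depth-$2$ subtrees $\boxed{\boxed{2}}$ and $\boxed{\boxed{1}+\boxed{1}}+\boxed{\boxed{7}}$ under distinct depth-$1$ nodes, whose sum-profiles $\boxed{2}$ and $\boxed{2}+\boxed{7}$ stay distinct, and then swap the leaf-multisets $\boxed{2}$ and $\boxed{1}+\boxed{1}$ to build $\beta$ --- but as written the second half of the theorem is not proved.
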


In fact, the examples that we construct in the proof will show that not only is it impossible to fill the interior of the 3-simplex, but even the remaining 2-simplex face cannot be filled.

\begin{proof}
	We again use the commutative-monoid monad $T$ and the bar construction of the $T$-algebra $A \coloneqq (\N,+)$. For the first kind of inner 3-horn as described above, consider
	\begin{align*}
		\alpha \;&\coloneqq\; 
		\boxed{ \boxed{ \boxed{2} + \boxed{2} } } 
		+ \boxed{ \boxed{ \boxed{2} } + \boxed{ \boxed{2} } }
		+ \boxed{ \boxed{ \boxed{3} } + \boxed{ \boxed{1} } }, \\[8pt]
		\gamma \;&\coloneqq\; 
		\boxed{ \boxed{ \boxed{2} + \boxed{2} } 
		+ \boxed{ \boxed{2} + \boxed{2} } }
		+ \boxed{ \boxed{ \boxed{3} + \boxed{1} } }, \\[8pt]
		\delta \;&\coloneqq\;
		\boxed{ \boxed{ \boxed{4} } } 
		+ \boxed{ \boxed{ \boxed{2} + \boxed{2} } 
		+ \boxed{ \boxed{3} + \boxed{1} } }.
	\end{align*}
	We verify that these 2-simplices indeed assemble to an inner 3-horn. We have that $\mu$ removes the outer boxes, $T\mu$ removes the mid-level boxes, and $T^2e$ removes the inner boxes (and possibly evaluates the sums). Hence indeed,
	\[
		(T\mu)(\alpha) \;=\; 
		\boxed{  \boxed{2}  + \boxed{2}  } 
		+ \boxed{ \boxed{2}  +  \boxed{2}  }
		+ \boxed{  \boxed{3}  +  \boxed{1}  } \;=\; \mu(\gamma),
	\]
	\smallskip
	\[
		(T^2 e)(\alpha) \;=\; 
		\boxed{ \boxed{4} } 
		+  \boxed{ \boxed{2} + \boxed{2} }
		+ \boxed{ \boxed{3} + \boxed{1} } 
		\;=\; \mu(\delta),
	\]
	and moreover,
	\begin{align}
	\begin{split}
		\label{gamma_delta}
		(T^2e)(\gamma) \;&=\; 
		\boxed{ \boxed{ 4 } 
		+  \boxed{ 4 }  }
		+ \boxed{ \boxed{ 4 } }     \\[8pt]
		(T^2e)(\delta) \;&=\; 
		\boxed{ \boxed{ 4 } } 
		+ \boxed{ \boxed{ 4 } 
		+ \boxed{ 4 } }
	\end{split}
	\end{align}
	resulting in $(T^2 e)(\gamma) = (T^2 e)(\delta)$, since these two expressions differ only by rearranging the summands.\footnote{This is where we are using that $T$ is not cartesian, but only weakly cartesian; the same example would not work with $T$ being the monoid monad, since the latter is cartesian and therefore its bar constructions are nerves of categories.}

	Hence $\alpha$, $\gamma$ and $\delta$ indeed define an inner 3-horn. Now if there existed a filler $\varepsilon\in T^4A$, then in particular there would have to be a $\beta \in T^3 A$ playing the role of the remaining 2-simplex, i.e.~satisfying the equations
	\[
		(T\mu)(\beta)=(T\mu)(\gamma),\qquad (T^2e)(\beta)=(T\mu)(\delta).
	\]
	Let's see why such a $\beta$ cannot exist. First of all,
	\begin{align*}
		(T\mu)(\gamma) \;&=\; 
		\boxed{  \boxed{2} + \boxed{2}  
		+  \boxed{2} + \boxed{2}  }
		+ \boxed{  \boxed{3} + \boxed{1}  }, \\[8pt]
		(T\mu)(\delta) \;&=\;
		\boxed{  \boxed{4}  } 
		+ \boxed{  \boxed{2} + \boxed{2}  
		+  \boxed{3} + \boxed{1}  }.
	\end{align*}
	Both terms consist of two outer boxes, and therefore $\beta$ must consist of two outer boxes as well. Thus, up to permutation,
	\[
		\begin{tikzcd}[row sep=large, column sep=-35]
			& \beta = \boxed{\vphantom{Jj}\cdots} + \boxed{\vphantom{Jj}\cdots} \ar[|->]{dl}[swap]{T\mu} \ar[|->]{dr}{T^2e} \\
			\boxed{  \boxed{3} + \boxed{1}  } 
			+  \boxed{  \boxed{2} + \boxed{2}  
			+  \boxed{2} + \boxed{2}  }
			&&
			\boxed{  \boxed{4}  } 
			+ \boxed{  \boxed{2} + \boxed{2}  
			+  \boxed{3} + \boxed{1}  }
		\end{tikzcd}
	\]
	Applying $Te$ to either desired result gives $\boxed{4}+\boxed{8}$. This shows that the two outer boxes of $\beta$ must match up with the other outer boxes as follows: in the first box of $\beta$, there must be an element of $TTA$ witnessing a partial evaluation from $\boxed{3} + \boxed{1}$ to $\boxed{4}$. In the second slot of $\beta$, we need to have a witness of a partial evaluation from $\boxed{2} + \boxed{2}  +  \boxed{2} + \boxed{2}$ to $\boxed{2} + \boxed{2} + \boxed{3} + \boxed{1}$. The proof is now complete upon noting that there is not such partial evaluation, for example because any nontrivial partial evaluation must strictly decrease the number of terms (\Cref{terms_decrease}).

	Concerning the second kind of inner 3-horn, consider similarly the terms
	\begin{align*}
		\alpha \;&\coloneqq\; 
		\boxed{ \boxed{ \boxed{2} + \boxed{2} } 
		+ \boxed{ \boxed{2} + \boxed{2} } } 
		+ \boxed{ \boxed{ \boxed{3} + \boxed{1} } }, \\[8pt]
		\beta \;&\coloneqq\; 
		\boxed{ \boxed{ \boxed{2} + \boxed{2} } } 
		+ \boxed{ \boxed{ \boxed{2} + \boxed{2} } 
		+ \boxed{ \boxed{3} + \boxed{1} } }, \\[8pt]
		\delta \;&\coloneqq\; 
		\boxed{ \boxed{ \boxed{4} + \boxed{4} } } 
		+ \boxed{ \boxed{ \boxed{4} } }.
	\end{align*}
	We have that
	\[
		\mu(\alpha) \;=\; 
		\boxed{ \boxed{2} + \boxed{2} } 
		+ \boxed{ \boxed{2} + \boxed{2} } 
		+ \boxed{ \boxed{3} + \boxed{1} }
		\;=\; \mu(\beta) 
	\]
	\[
		(T^2e)(\alpha) \;=\; 
		\boxed{ \boxed{4} + \boxed{4} } 
		+  \boxed{ \boxed{4} } 
		\;=\; \mu(\delta)
	\]
	and
	\begin{align*}
		(T^2e)(\beta) \;&=\; 
		\boxed{ \boxed{ 4 } } 
		+ \boxed{ \boxed{ 4 } 
		+ \boxed{ 4 } } \\
		(T\mu)(\delta)\;&=\;
		\boxed{  \boxed{4} + \boxed{4}  } 
		+ \boxed{  \boxed{4}  }
	\end{align*}
	which, as before, differ only by a permutation, and so are equal as elements of $TTA$. Therefore $\alpha$, $\beta$ and $\delta$ form an inner 3-horn. As before, we can show that we cannot even find a 2-simplex $\gamma\in T^3 A$ such that $\mu(\gamma)=(T\mu)(\alpha)$ and $(T\mu)(\gamma)=(T\mu)(\beta)$. Since
	\begin{align*}
		(T\mu)(\alpha) \;&=\; 
		\boxed{  \boxed{2} + \boxed{2} 
		+  \boxed{2} + \boxed{2}  } 
		+ \boxed{  \boxed{3} + \boxed{1}  } \\
		(T\mu)(\beta) \;&=\; 
		\boxed{  \boxed{2} + \boxed{2}  } 
		+ \boxed{  \boxed{2} + \boxed{2}  
		+  \boxed{3} + \boxed{1}  } 
	\end{align*}
	this would mean that we would have:
	\[
		\begin{tikzcd}[column sep=small,row sep=large]
			& \gamma \ar[|->]{dl}[swap]{\mu} \ar[|->]{dr}{T\mu} \\
			\boxed{  \boxed{2} + \boxed{2} 
			+  \boxed{2} + \boxed{2}  } 
			+ \boxed{  \boxed{3} + \boxed{1}  }
			&&
			\boxed{  \boxed{2} + \boxed{2}  } 
			+ \boxed{  \boxed{2} + \boxed{2}  
			+  \boxed{3} + \boxed{1}  } 
		\end{tikzcd}
	\]
	Hence upon considering $TA$ as a free $T$-algebra, the term on the right would have to be a partial evaluation of the term on the left. By the number-of-terms counting of \Cref{terms_decrease}, this is again not the case.
\end{proof}

\section{Compositional structure as completeness properties}
\label{seccompositional}

The results of the previous section have all been negative: we have only found counterexamples to the compositional structure of the bar construction in dimensions higher than 1.
In this final section, we focus on positive statements which hold for suitably well-behaved monads. The resulting compositional properties can be formulated in terms of filler conditions for simplicial sets, which we studied in detail in \cite{delta_squares}. We recall some of the main definitions and results from there, and show how to apply them to the concrete case of bar constructions of particular monads. 

We are mostly interested in when the commutative squares of structure maps in the bar construction form weak or strong pullbacks. 
For convenience, we fix the following terminology; see the companion paper \cite{delta_squares} for more details.

\begin{defn}
 Let $X:\Delta^\op\to\Set$ be a simplicial set. We say that $X$ is:
 \begin{itemize}
	\item \emph{inner span complete} if it sends pushouts of coface maps in $\Delta$ to weak pullbacks in $\Set$ (\cite[Definition 5.2]{delta_squares}).
	\item \emph{stiff} if it sends those pushouts of one coface map and one codegeneracy map in $\Delta$ that are preserved by $\Delta \to \Set$ to pullbacks in $\Set$ (\cite[4.1]{GKT2}, \cite[Example 4.14]{delta_squares}).
	\item \emph{split} if it sends all pushouts of one coface map and one codegeneracy map in $\Delta$ to pullbacks in $\Set$ (\cite[5.1]{GKT2}, \cite[Example 4.15]{delta_squares}).
 \end{itemize}
\end{defn}

To make these definitions more concrete, it helps to consider the following characterizations 
\cite[Theorem 4.11, Examples 4.14 and 4.15]{delta_squares}.

\begin{thm}\label{completeness_characterizations}
Let $X$ be a simplicial set.
\begin{itemize}
	\item $X$ is inner span complete if and only if the following squares are weak pullbacks:
		\[\begin{tikzcd}
			X_n \arrow{r}{d_i} \arrow{d}[swap]{d_j} & X_{n-1} \arrow{d}[swap]{d_{j-1}} \\
			X_{n-1} \arrow{r}{d_i} & X_{n-2}	\\
			{} \ar[phantom]{r}{(0 \le i < j - 1 \le n - 1)}		& {}
		\end{tikzcd}\]
	\item $X$ is stiff if and only if the following squares are pullbacks:
		\begin{equation}
			\label{stiff}
			\begin{tikzcd}
			X_n \arrow{r}{d_i} \arrow{d}[swap]{s_j} & X_{n-1} \arrow{d}[swap]{s_{j-1}} \\
			X_{n+1} \arrow{r}{d_i} & X_n	\\
			{} \ar[phantom]{r}{(0 \le i < j \le n)}		& {}
		\end{tikzcd}\qquad\begin{tikzcd}
			X_n \arrow{r}{d_i} \arrow{d}[swap]{s_j} & X_{n-1} \arrow{d}[swap]{s_j} \\
			X_{n+1} \arrow{r}{d_{i+1}} & X_n	\\
			{} \ar[phantom]{r}{(0 \le j < i \le n)}		& {}
		\end{tikzcd}\end{equation}
	\item $X$ is split if and only if it is stiff and the following squares are pullbacks:
		\begin{equation}
			\label{split}
			\begin{tikzcd}
			X_n \arrow[equals]{r} \arrow{d}[swap]{s_is_i} & X_n \arrow{d}[swap]{s_i} \\
			X_{n+2} \arrow{r}{d_{i+1}} & X_{n+1}	\\
			{} \ar[phantom]{r}{(0 \le i \le n)}		& {}
		\end{tikzcd}
		\end{equation}
\end{itemize}
\end{thm}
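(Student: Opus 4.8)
The statement collects \cite[Theorem~4.11 and Examples~4.14 and~4.15]{delta_squares}, so the plan is to explain how those specialize here, isolating the one step that is genuinely delicate. The common combinatorial core is the claim that every pushout square of coface maps in $\Delta$, and likewise every pushout of a coface map with a codegeneracy map of either relevant kind, can be built as an iterated pasting --- horizontal and vertical composition of commuting squares --- of the \emph{elementary} squares expressing the simplicial identities ($d^j d^i = d^i d^{j-1}$ for $j > i+1$ in the coface case, and the mixed coface/codegeneracy identities in the other cases). I would prove this by induction on the dimension of the pushout object, gluing in one extra vertex at a time. Part of the bookkeeping is verifying that the case $j = i+1$ must genuinely be excluded --- the span $d^i \leftarrow \cdot \rightarrow d^i$ has no pushout in $\Delta$ at all --- which is precisely what pins down the index ranges $0 \le i < j-1 \le n-1$ (and their analogues) in the statement, so the listed family of squares is neither too small nor too large. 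A parallel observation separates the two mixed families: the squares in \eqref{split} are exactly the coface/codegeneracy pushouts that are \emph{not} preserved by the underlying-set functor $\Delta \to \Set$, which is why they enter the characterization of \emph{split} but not that of \emph{stiff}.

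Granting this core, the ``only if'' directions are immediate, since every square in each listed family is itself a pushout of the relevant shape. For the ``if'' directions one applies the functor $X$, which turns a pasting of commuting squares in $\Delta$ into a pasting of commuting squares in $\Set$, and then propagates the relevant exactness property along the pasting. In the \emph{stiff} and \emph{split} cases that property is being an honest pullback, and pullbacks are stable under pasting --- the easy, cancellation-free half of the pullback lemma (\Cref{pullback_lemma}) --- so the reduction to the generating squares is routine.

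The delicate case, and where I expect the real work to lie, is \emph{inner span complete}, where the property in play is being a \emph{weak} pullback. Here one cannot invoke the pullback lemma: its cancellation direction fails badly for weak pullbacks, as the example immediately after \Cref{pullback_lemma} shows. The rescue is that weak pullbacks in $\Set$ are nonetheless stable under \emph{pasting}: if in a horizontal composite of two squares both halves are weak pullbacks, then the outer rectangle is a weak pullback, by a two-line diagram chase --- given a compatible pair of elements over the right-hand and bottom edges, first lift through the right square to an element of the middle column, then lift that through the left square --- and dually for vertical composites. Iterating this along the decomposition of an arbitrary pushout of coface maps into elementary pieces gives the result.

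So concretely the plan is: (i) prove the combinatorial decomposition lemma, expressing every relevant pushout in $\Delta$ as an explicit iterated pasting of simplicial-identity squares; (ii) record the pasting-stability of weak pullbacks in $\Set$; (iii) combine (i) with the pasting half of \Cref{pullback_lemma} for the stiff and split cases; (iv) check the boundary index cases so that the stated ranges of $i,j$ are exactly those for which the generating square is an actual pushout. The main obstacle is step (i): the combinatorics of decomposing a general pushout of faces and degeneracies into the generating squares --- conceptually transparent, but fiddly to set up cleanly and in a way visibly compatible with functorial pasting.
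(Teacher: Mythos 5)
This statement is not proved in the paper at all: it is imported verbatim from the companion paper as \cite[Theorem~4.11, Examples~4.14 and~4.15]{delta_squares}, so there is no in-document argument to compare yours against. Judged on its own terms, your strategy is the natural one and your key technical observations are correct: pasting (as opposed to cancellation) of weak pullbacks in $\Set$ does hold by the diagram chase you describe, even though the cancellation half fails; the span $d^i \leftarrow \cdot \rightarrow d^i$ indeed has no pushout in $\Delta$, which is what excludes $j = i+1$; and the elementary coface/codegeneracy pushouts not preserved by $\Delta \to \Set$ are exactly those with the coface index wedged between the two collapsed elements, which is precisely the family \eqref{split}. The ``only if'' directions and the reduction via pasting are then routine as you say.

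That said, essentially all of the content sits in your step (i), which you describe but do not prove, so as written this is an outline rather than a proof. Two points there deserve explicit attention beyond ``fiddly bookkeeping.'' First, you need the existence criterion for pushouts of composite coface maps (the two injections must insert their new elements into disjoint gaps of the common source) together with the fact that, when the pushout exists, every cell of the resulting grid is itself a pushout of generating cofaces with indices in the admissible range $i < j-1$. Second, and more delicately, for the \emph{stiff} characterization you must check that the grid decomposition of a pushout that \emph{is} preserved by $\Delta \to \Set$ uses only \emph{preserved} elementary cells --- otherwise the hypothesis that the squares \eqref{stiff} are pullbacks would not suffice, since a non-preserved cell of type \eqref{split} could appear in the pasting. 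This does hold (a single non-preserved cell forces a triple collapse that makes the outer pushout smaller than its set-theoretic counterpart, hence non-preserved), but it is a genuine step of the argument and should be stated and proved, not absorbed into the general decomposition claim. A small terminological point: the composition half of the pullback lemma that you invoke for the stiff and split cases is not the statement recorded as \Cref{pullback_lemma} in this paper, which is the cancellation half; you should state the half you actually use.
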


Thus, inner span completeness describes simplicial sets such that certain ``spans'' consisting of two simplices sharing a face map can always be filled into an $n$-simplex. Stiff simplicial sets are those for which any simplex with a degenerate \emph{spinal edge}, namely the edge between the $i$th and $(i+1)$st vertices, is itself degenerate in the manner that would produce such a degenerated edge. Split simplicial sets extend this condition to any degenerate edge, which adds on the condition of \emph{indecomposable units} (\cite[5.5]{GKT2}) in which any $n$-simplex whose composite edge from first to last vertex is degenerate is itself a degenerate $n$-simplex at a single vertex.

Applying \Cref{completeness_characterizations} with $n = 2$ shows that any two consecutive $1$-simplices in an inner span complete simplicial set can be filled to a $2$-simplex. Iterating this shows that any string of $1$-simplices spanning $n+1$ vertices can be ``composed'' into an $n$-simplex. While these properties are rather straightfoward, a more general and powerful result is that any \emph{directed acyclic configuration} with $n+1$ vertices can be completed to an entire $n$-simplex~\cite[Theorem 5.14]{delta_squares}.

Since, as we show below, algebras of BC monads have inner span complete bar constructions, it follows that every such bar construction has fillers for all directed acyclic configurations. This is our strongest result on the compositional structure of bar constructions.
We will also relate the additional properties of weakly cartesian and strictly positive monads to stiffness and splitness, respectively; splitness effectively describes the ``non-reversibility'' of partial evaluations in algebras of strictly positive monads.

\subsection{The bar construction for BC monads}

Let $(T,\eta,\mu)$ be a BC monad. Its definition weak pullback conditions translate directly to weak pullback properties of the standard commuting squares of generating structure maps in the bar constructions for algebras of $T$. For instance, the relations for face maps between the set of triangles $X_2 = T^3 A$, edges $X_1 = T^2 A$ and vertices $X_0 = TA$ are given by the following squares:
\begin{equation}
	\label{triangle_diagram}
	\begin{tikzcd} 
   		& & T A \\ 
   		& T^2 A \arrow{ur}{T e} \arrow{dl}[swap]{\mu} & T^3 A \arrow{l}[swap]{\mu} \arrow{r}{T^2 e} \arrow{d}[swap]{T \mu} & T^2 A \arrow{dr}{T e} \arrow{ul}[swap]{\mu} \\
  		T A & & T^2 A \arrow{rr}{T e} \arrow{ll}[swap]{\mu} & & TA
	\end{tikzcd}
\end{equation}
In order from bottom left to top to bottom right, these squares are instances for $\barc{T}{A}$ of the simplicial identities for face maps in dimension two, namely the following.
\begin{center}\begin{tikzcd}
	X_2 \arrow{r}{d_1} \arrow{d}[swap]{d_2} & X_1 \arrow{d}[swap]{d_1} \\
	X_1 \arrow{r}{d_1} & X_0
\end{tikzcd}\qquad\begin{tikzcd}
	X_2 \arrow{r}{d_0} \arrow{d}[swap]{d_2} & X_1 \arrow{d}[swap]{d_1} \\
	X_1 \arrow{r}{d_0} & X_0
\end{tikzcd}\qquad\begin{tikzcd}
	X_2 \arrow{r}{d_0} \arrow{d}[swap]{d_1} & X_1 \arrow{d}[swap]{d_0} \\
	X_1 \arrow{r}{d_0} & X_0
\end{tikzcd}\end{center}
If $\mu$ is weakly cartesian, then the top naturality square in~\Cref{triangle_diagram} is a weak pullback, hence so is the middle square above.  There is no reason to expect that the left or right squares would be weak pullbacks.

More generally, the square
\begin{center}\begin{tikzcd}
		T^{n+1} A \ar{r}{T^{n-i} \mu} \ar[swap]{d}{T^{n-j} \mu}		& T^n A \ar{d}{T^{n-j} \mu}	\\
		T^n A \ar{r}{T^{n-i-1} \mu}					& T^{n-1} A
\end{tikzcd}\end{center}
is $T^{n-j}$ applied to a naturality square of $\mu$ for any $0 \le i < j - 1 \le n - 1$, where for $i = 0$, it is understood that the horizontal maps are $T^n e$ and $T^{n-1} e$, respectively. The BC assumption implies that this square is weakly cartesian. Hence for $X = \barc{T}{A}$, the following square is a weak pullback when $i < j - 1$:
\begin{center}\begin{tikzcd}
	X_n \arrow{r}{d_i} \arrow{d}[swap]{d_j} & X_{n-1} \arrow{d}[swap]{d_{j-1}} \\
	X_{n-1} \arrow{r}{d_i} & X_{n-2}
\end{tikzcd}\end{center}
The analogous squares for $i=j-1$ are those given by associativity of $\mu$, multiplicativity of $e$, or some functor power $T^k$ applied to such a square, and do not need to be weak pullbacks in general.

In summary, the weak pullbacks among generating face maps in $\barc{T}{A}$ are then precisely those characterizing inner span complete simplicial sets in \Cref{completeness_characterizations}, which proves the following:

\begin{thm}
	\label{BC_to_ISpC}
	For any algebra $A$ of a BC monad $T$, the bar construction $\barc{T}{A}$ is inner span complete.
\end{thm}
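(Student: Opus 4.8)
The plan is to reduce everything to the characterization of inner span completeness in \Cref{completeness_characterizations}: the simplicial set $X = \barc{T}{A}$ is inner span complete if and only if, for all $0 \le i < j - 1 \le n - 1$, the square
\[
\begin{tikzcd}
X_n \arrow{r}{d_i} \arrow{d}[swap]{d_j} & X_{n-1} \arrow{d}[swap]{d_{j-1}} \\
X_{n-1} \arrow{r}{d_i} & X_{n-2}
\end{tikzcd}
\]
is a weak pullback in $\Set$. Since $X_n = T^{n+1}A$ with face maps $d_0 = T^n e$ and $d_k = T^{n-k}\mu$ for $1 \le k \le n$, the whole proof amounts to recognizing each such square as a weak pullback using only the two ingredients of the BC hypothesis: that $T$ preserves weak pullbacks, and that $\mu$ is a weakly cartesian natural transformation.

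First I would unfold the face maps and reindex. When $i \ge 1$, every leg of the square is a functor power of $\mu$, and a short computation identifies the square with $T^{n-j}$ applied to the naturality square of $\mu$ at the morphism $T^{\,j-2-i}\mu \colon T^{j-1}A \to T^{j-2}A$ (this map makes sense precisely because $i \le j-2$). When $i = 0$, the horizontal legs become $T^n e$ and $T^{n-1}e$, and the square is instead $T^{n-j}$ applied to the naturality square of $\mu$ at the morphism $T^{\,j-2}e \colon T^{j-1}A \to T^{j-2}A$ (which makes sense because $i < j-1$ forces $j \ge 2$). In either case: $\mu$ weakly cartesian makes the relevant naturality square a weak pullback, and $T$ preserving weak pullbacks makes the functor power $T^{n-j}$ of it a weak pullback too. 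These are exactly the squares demanded by \Cref{completeness_characterizations}, so this proves the theorem.

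It is worth noting what is \emph{not} used: the remaining face-map squares, those with $i = j - 1$, encode associativity of $\mu$, multiplicativity of $e$, or functor powers of such squares, and the BC hypothesis says nothing about them --- but inner span completeness does not ask for them to be weak pullbacks, so there is nothing to check. I expect no conceptual obstacle here; the only genuine care is the index bookkeeping needed to confirm that every square in the characterization with $i < j-1$ really does arise as $T^{n-j}$ of a naturality square of $\mu$ at a map of the form $T^k\mu$ or $T^k e$, and that the edge cases $i = 0$ and $j = n$ are correctly absorbed (with the convention that the relevant horizontal map is $T^\bullet e$ rather than $T^\bullet\mu$ when an index hits $0$). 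That bookkeeping is the main, and fairly mild, obstacle.
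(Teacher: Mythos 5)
Your proposal is correct and follows essentially the same route as the paper: both identify each square $d_i d_j = d_{j-1} d_i$ with $i < j-1$ as $T^{n-j}$ applied to a naturality square of $\mu$ at a map of the form $T^k\mu$ (or $T^k e$ when $i=0$), and then invoke weak cartesianness of $\mu$ together with preservation of weak pullbacks by $T$. The index bookkeeping you describe checks out and matches the paper's computation.
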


Since inner span complete simplicial sets in particular have fillers for inner $2$-horns, this reproduces and generalizes the transitivity of the partial evaluation relation for BC monads from \Cref{pe_transitive}. More generally, it is worth reiterating that the existence of all fillers for directed acyclic configurations are implied, although for which we refer to~\cite[Theorem~5.14]{delta_squares} for the details.

\begin{ex}
 We saw in \Cref{distBC} that the distribution monad is BC. By \Cref{BC_to_ISpC}, this implies that for all its algebras, the bar construction is inner span complete. 
 At the lowest level, this in particular implies that the partial evaluation relation (known also as \emph{second-order stochastic dominance}, see \cite{FP}) is transitive.
 Inner span completeness is a stronger property than just inducing a transitive relation, and this may reflect a more profound structure at the level of random variables, generalizing the compositional nature of conditional expectation; see again~\cite{FP} for the relationship between partial evaluations and conditional expectation.

 However, a detailed analysis of the probabilistic meaning of inner span completeness is beyond the scope of this paper, and ideally would have to be carried out in categories other than $\cat{Set}$, facilitating the treatment of measure-theoretic probability.
\end{ex}

\subsection{The bar construction for weakly cartesian monads}

We now consider which additional compositional properties of the bar construction hold for weakly cartesian monads $(T,\eta,\mu)$ on $\Set$. In addition to the weak pullback squares which follow from the BC property for $T$, we therefore also have that $\eta$ is a weakly cartesian transformation. This yields in particular the weak pullback squares
\begin{equation}
	\label{Tstiff1}
	\begin{tikzcd}
		T^{n+1} A \arrow{r}{T^{n-i} \mu} \arrow{d}[swap]{T^{n-j+1} \eta} & T^{n} A \arrow{d}{T^{n-j+1} \eta} \\
		T^{n+2} A \arrow{r}{T^{n-i+1} \mu} & T^{n+1} A \\
		{} \ar[phantom]{r}{(0 \le i < j \leq n)}	& {}
	\end{tikzcd}
\end{equation}
where again $T^n \mu$ needs to be replaced by $T^n e$ for $i = 0$. Moreover, the squares
\begin{equation}
	\label{Tstiff2}
	\begin{tikzcd}
		T^{n+1} A \arrow{r}{T^{n-i} \mu} \arrow{d}[swap]{T^{n-j+1} \eta} & T^{n} A \arrow{d}{T^{n-j} \eta} \\
		T^{n+2} A \arrow{r}{T^{n-i} \mu} & T^{n+1} A	\\
		{} \ar[phantom]{r}{(0 \le j < i \leq n)}	& {}
	\end{tikzcd}
\end{equation}
which correspond to $T^{n-i}$ applied to a naturality square of $\mu$, are weak pullbacks already for any BC monad. 

In fact by \Cref{mono_strong_pullback}, as $T^k \eta$ is monic these squares are in fact strong pullbacks. These are precisely the squares in $\barc{T}{A}$ characterizing stiff simplicial sets~\eqref{stiff}, so by \Cref{completeness_characterizations} and \Cref{BC_to_ISpC} we have proved the following:

\begin{thm}\label{wc2ic}
For any algebra $A$ of a weakly cartesian monad $T$, the bar construction $\barc{T}{A}$ is both inner span complete and stiff.
\end{thm}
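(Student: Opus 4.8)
The plan is to derive the claim from two results already in hand: \Cref{BC_to_ISpC} and the combinatorial characterization of stiffness in \Cref{completeness_characterizations}. Since a weakly cartesian monad is in particular a BC monad, \Cref{BC_to_ISpC} gives inner span completeness of $\barc{T}{A}$ for free, and all of the work goes into verifying stiffness.

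By \Cref{completeness_characterizations}, stiffness of $X = \barc{T}{A}$ means that the two families of squares in~\eqref{stiff} are (strong) pullbacks. Writing out the face maps $d_i = T^{n-i}\mu$ (with $T^n e$ in place of $T^n\mu$ when $i = 0$) and the degeneracy maps $s_j = T^{n-j+1}\eta$ of the bar construction, these two families become exactly the families \eqref{Tstiff2} (the case $0 \le j < i \le n$, where the face acts at a strictly higher level than the degeneracy) and \eqref{Tstiff1} (the case $0 \le i < j \le n$, where the degeneracy acts at a strictly higher level than the face). The strategy in both cases is the same: exhibit the square as a power of $T$ applied to a single naturality square — of $\mu$ in the first case, of $\eta$ in the second — and then invoke weak cartesianness.

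For \eqref{Tstiff2} I would ``localize'' the degeneracy: because $j < i$, the square is $T^{n-i}$ applied to the naturality square of $\mu$ at the morphism $T^{\,i-1-j}\eta_{T^j A}\colon T^{i-1}A \to T^i A$. Since $\mu$ is weakly cartesian (as $T$ is BC) and $T^{n-i}$ preserves weak pullbacks, this is a weak pullback — and this part needs nothing beyond the BC hypothesis. For \eqref{Tstiff1} I would instead localize the face: because $i < j$, the square is $T^{\,n-j+1}$ applied to the naturality square of $\eta$ at the morphism $T^{\,j-1-i}\mu_{T^{i-1}A}\colon T^j A \to T^{j-1}A$ (read as $T^{j-1}e$ when $i = 0$). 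This is where the extra hypothesis enters: weak cartesianness of $\eta$ — precisely what a weakly cartesian monad adds to a BC monad — makes this naturality square a weak pullback, and $T^{\,n-j+1}$ preserves it. Finally, in both families the vertical maps have the form $T^k\eta$ and hence are split monic, being degeneracy maps; so \Cref{mono_strong_pullback} upgrades each of these weak pullbacks to a strong pullback. This yields all the pullback squares required by \eqref{stiff}, so $\barc{T}{A}$ is stiff, and combined with inner span completeness this proves the theorem.

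The main obstacle is purely one of bookkeeping: checking that the relevant generating face (resp.\ degeneracy) map of $\barc{T}{A}$, when restricted to act on the iterate $T^j A$ (resp.\ $T^{i-1}A$) sitting inside $T^{n+1}A$, is again a generating structure map of the appropriate arity, so that the entire $2\times 2$ square is \emph{literally} $T^{\text{(power)}}$ applied to one naturality square of $\mu$ or $\eta$ rather than merely resembling one. Once the indices are lined up, the remaining verification is just weak cartesianness together with \Cref{mono_strong_pullback}, with no further computation.
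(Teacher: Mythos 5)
Your proposal is correct and follows essentially the same route as the paper: inner span completeness is quoted from \Cref{BC_to_ISpC}, the stiffness squares of \Cref{completeness_characterizations} are identified with powers of $T$ applied to naturality squares of $\eta$ (where weak cartesianness of $\eta$ is the new input) and of $\mu$ (already covered by BC), and \Cref{mono_strong_pullback} upgrades these to strong pullbacks because the degeneracies $T^k\eta$ are monic. The index bookkeeping you flag as the main obstacle checks out exactly as in the paper's displays \eqref{Tstiff1} and \eqref{Tstiff2}.
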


Since weakly cartesian monads include those arising from symmetric operads (\Cref{sym_operads_etc}), this result applies to many monads describing commonly occurring algebraic structures. It applies in particular to the commutative-monoid monad, for which we have studied the bar construction $\barc{T}{\N}$ in some detail in \Cref{secbar} and found counterexamples to various hypotheses about its compositional structure, such as the non-uniqueness of composites. We now have a positive result about its compositional structure, and many other bar constructions of a similar flavor, namely that of being a stiff and inner span complete simplicial set, with all the filler and degeneracy properties thus entailed. 

\subsection{The bar construction for strictly positive monads}

Recall from \Cref{strictly_pos} that a monad $(T,\mu,\eta)$ is \emph{strictly positive} if for all $X$ the square
\[\begin{tikzcd}
	X \arrow[equals]{r} \arrow{d}[swap]{\eta\eta} & X \arrow{d}[swap]{\eta} \\
	T^2 X \arrow{r}{\mu} & T X
\end{tikzcd}\]
is a (necessarily strong by \Cref{mono_strong_pullback}) pullback.

If $X$ is the bar construction of a $T$-algebra $A$, then the square below left corresponds to the square below right.
\begin{center}\begin{tikzcd}
			X_{n} \arrow[equals]{r}{} \arrow{d}[swap]{s_{i+1} s_i}	& X_{n} \arrow{d}[swap]{s_i}		\\
			X_{n+2} \arrow{r}{d_{i+1}}				& X_{n+1}				\\
			{} \ar[phantom]{r}{(0 \le i \le n)}			& {}
\end{tikzcd}\qquad\quad\begin{tikzcd}
		T^{n+1} A \arrow[equals]{r}{} \arrow{d}[swap]{T^{n-i+1}(\eta\eta)} & T^{n+1} \arrow{d}{T^{n-i+1}\eta} A \\
		T^{n+3} A \arrow{r}{T^{n-i+1}\mu}				& T^{n+2} A				\\
		{} \ar[phantom]{r}{(0 \le i \le n)}			& {}
\end{tikzcd}\end{center}
These squares are all obtained by repeated application of $T$ to the strict positivity square above, and are therefore all pullbacks when $T$ is weakly cartesian and strictly positive. We then have the following by \Cref{completeness_characterizations} and \Cref{wc2ic}.

\begin{prop}\label{pwc2pc}
	For any algebra $A$ of a weakly cartesian and strictly positive monad $T$, the bar construction $\barc{T}{A}$ is both inner span complete and split.
\end{prop}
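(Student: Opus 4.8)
The plan is to build on \Cref{wc2ic}, which already gives that $\barc{T}{A}$ is inner span complete and stiff whenever $T$ is weakly cartesian. By the characterization of split simplicial sets in \Cref{completeness_characterizations}, and since stiffness is already established, all that remains is to verify that the squares \eqref{split} are (strong) pullbacks for $X = \barc{T}{A}$; strict positivity of $T$ is exactly the extra input that will deliver this.

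First I would make the squares \eqref{split} explicit in the bar construction. Unwinding the definitions of the face and degeneracy maps of $\barc{T}{A}$, the square \eqref{split} with parameters $n$ and $i$ is, up to relabelling the objects, the image under the functor $T^{n-i+1}$ of the strict positivity square \eqref{possquare} instantiated at the set $T^i A$; this is precisely the content of the ``corresponds to'' passage immediately preceding the statement. The one step requiring a little care here is matching the maps: the equality edge of \eqref{split} comes from the identity edge of \eqref{possquare}, the degeneracies $s_i$ and $s_{i+1}s_i$ from the appropriate powers of $\eta$, and the face map $d_{i+1}$ from a power of $\mu$.

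With this identification in place the conclusion is immediate. Strict positivity of $T$ says the square \eqref{possquare} is a pullback, hence in particular a weak pullback. Since $T$ is weakly cartesian it preserves weak pullbacks, and therefore so does every iterated power $T^{n-i+1}$ (a composite of weak-pullback-preserving functors); thus the image square is a weak pullback. But one of its edges is an identity (the image of the identity edge of \eqref{possquare}), which is in particular monic, so by \Cref{mono_strong_pullback} this weak pullback is in fact a strong pullback. Hence every square of the form \eqref{split} in $\barc{T}{A}$ is a pullback in $\Set$, and combining this with the stiffness and inner span completeness from \Cref{wc2ic} via \Cref{completeness_characterizations} shows that $\barc{T}{A}$ is inner span complete and split.

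I do not anticipate a genuine obstacle: the statement is an assembly of \Cref{wc2ic}, \Cref{mono_strong_pullback}, the stability of weak pullbacks under functor composition, and the definition of strict positivity. The only mildly delicate point is the bookkeeping in the second step — correctly recognizing the abstract square \eqref{split} as a power of the concrete square \eqref{possquare}, with the indices lined up properly — which is routine but worth spelling out carefully.
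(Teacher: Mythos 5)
Your proposal is correct and follows essentially the same route as the paper: the paper likewise identifies the squares \eqref{split} as $T^{n-i+1}$ applied to the strict positivity square \eqref{possquare}, concludes they are pullbacks from weak cartesianness of $T$ (together with \Cref{mono_strong_pullback}), and combines this with \Cref{wc2ic} and \Cref{completeness_characterizations}. Your write-up merely spells out the weak-pullback-to-strong-pullback upgrade a bit more explicitly than the paper does.
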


\begin{ex}
	For $M$ a monoid, the $M$-set monad $M \times -$ is cartesian. The bar construction of an $M$-set $A$ is therefore the nerve of a category, which we described in \Cref{monoid_action_bar}. If the unit of $M$ cannot be factored nontrivially, then the monad $M \times -$ is strictly positive (\Cref{monoid_spos}), and therefore $\barc{M\times -}{A}$ is split. This matches up with the fact that a nerve of a category is split if and only if no identity morphism can be factored nontrivially.
\end{ex}

By \Cref{nonpositive,semigroups}, all semigroups and commutative semigroups also have inner span complete and split bar constructions, when considered as algebras of the semigroup or commutative-semigroup monads respectively.

\begin{remark}
	Recall that the distribution monad is BC and strictly positive (\Cref{distBC,dist_monad_pos}), but not weakly cartesian (\Cref{distribution_monad_not_wc}). For monads of this kind, \Cref{pwc2pc} is ``almost'' true: the bar construction of any algebra is inner span complete, and is such that~\eqref{split} and the right square in~\eqref{stiff} are pullbacks for the same reasons as above. Thus what is missing for splitness (and stiffness) is only the class of squares as in the left of~\eqref{stiff}.
\end{remark}

\bibliographystyle{plain} 
\bibliography{partial_evaluations}

\begin{thebibliography}{10}

\bibitem{baez_notes}
John Baez.
\newblock Simplicial sets from algebraic gadgets.
\newblock Notes available at
  \href{http://math.ucr.edu/home/baez/qg-spring2007/s07week06b.pdf}{http://math.ucr.edu/home/baez/qg-spring2007/s07week06b.pdf}.

\bibitem{batanin}
Michael~A. Batanin.
\newblock The {E}ckmann-{H}ilton argument and higher operads.
\newblock {\em Adv. Math.}, 217(1):334--385, 2008.
\newblock \href{https://arxiv.org/abs/math/0207281}{arXiv:math/0207281}.

\bibitem{borceux2}
Francis Borceux.
\newblock {\em Handbook of categorical algebra. 2}, volume~51 of {\em
  Encyclopedia of Mathematics and its Applications}.
\newblock Cambridge University Press, Cambridge, 1994.
\newblock Categories and structures.

\bibitem{borceuxbourn}
Francis Borceux and Dominique Bourn.
\newblock {\em Mal’cev, protomodular, homological and semi-abelian
  categories}, volume 566 of {\em Mathematics and Its Applications}.
\newblock Springer, 2004.

\bibitem{ching}
Michael Ching.
\newblock Bar constructions for topological operads and the {G}oodwillie
  derivatives of the identity.
\newblock {\em Geometry \& Topology}, 9:833--934, 2005.
\newblock \href{https://arxiv.org/abs/math/0501429}{arXiv:math/0501429}.

\bibitem{CHJ}
Maria~Manuel Clementino, Dirk Hofmann, and George Janelidze.
\newblock The monads of classical algebra are seldom weakly {C}artesian.
\newblock {\em J. Homotopy Relat. Struct.}, 9(1):175--197, 2014.
\newblock
  \href{http://www.mat.uc.pt/preprints/ps/p1246.pdf}{http://www.mat.uc.pt/preprints/ps/p1246.pdf}.

\bibitem{delta_squares}
Carmen Constantin, Tobias Fritz, Paolo Perrone, and Brandon Shapiro.
\newblock Weak cartesian properties of simplicial sets, 2021.
\newblock \href{https://arxiv.org/abs/2105.04775}{arXiv:2105.04775}.

\bibitem{FP}
Tobias Fritz and Paolo Perrone.
\newblock Monads, partial evaluations, and rewriting.
\newblock {\em Proceedings of MFPS 36, ENTCS}, 2020.
\newblock \href{https://arxiv.org/abs/1810.06037}{arXiv:1810.06037}.

\bibitem{GKT1}
Imma G{\'a}lvez-Carrillo, Joachim Kock, and Andrew Tonks.
\newblock Decomposition spaces, incidence algebras and {M}{\"o}bius inversion
  {I}: Basic theory.
\newblock {\em Advances in Mathematics}, 331:952--1015, 2018.
\newblock
  \href{http://dx.doi.org/10.1016/j.aim.2018.03.016}{dx.doi.org/10.1016/j.aim.2018.03.016}.

\bibitem{GKT2}
Imma G{\'a}lvez-Carrillo, Joachim Kock, and Andrew Tonks.
\newblock Decomposition spaces, incidence algebras and {M}{\"o}bius inversion
  {II}: Completeness, length filtration, and finiteness.
\newblock {\em Advances in Mathematics}, 333:1242--1292, 2018.
\newblock
  \href{http://dx.doi.org/10.1016/j.aim.2018.03.017}{dx.doi.org/10.1016/j.aim.2018.03.017}.

\bibitem{golan}
Jonathan~S. Golan.
\newblock {\em Semirings and their applications}.
\newblock Kluwer Academic Publishers, Dordrecht, 1999.

\bibitem{heaps}
Christopher~D. Hollings and Mark~V. Lawson.
\newblock {\em Wagner's Theory of Generalised Heaps}.
\newblock Springer, 2017.

\bibitem{JoyalAnalytic}
Andr{\'e} Joyal.
\newblock Foncteurs analytiques et esp{\`e}ces de structures.
\newblock In Gilbert Labelle and Pierre Leroux, editors, {\em Combinatoire
  {\'e}num{\'e}rative}, pages 126--159. Springer Berlin Heidelberg, 1986.

\bibitem{joyal}
Andr\'e Joyal.
\newblock Quasi-categories and {K}an complexes.
\newblock volume 175, pages 207--222. Elsevier, 2002.
\newblock Special volume celebrating the 70th birthday of Professor Max Kelly.

\bibitem{kock}
Joachim Kock.
\newblock The incidence comodule bialgebra of the {B}aez-{D}olan construction.
\newblock {\em Advances in Mathematics}, 383, 2021.
\newblock \href{https://arxiv.org/abs/1912.11320}{arXiv:1912.11320}.

\bibitem{leinster}
Tom Leinster.
\newblock {\em Higher operads, higher categories}, volume 298 of {\em London
  Mathematical Society Lecture Note Series}.
\newblock Cambridge University Press, Cambridge, 2004.
\newblock \href{https://arxiv.org/abs/math/0305049}{arXiv:math/0305049}.

\bibitem{simpson_independence}
Alex Simpson.
\newblock Category-theoretic structure for independence and conditional
  independence.
\newblock In {\em The {T}hirty-third {C}onference on the {M}athematical
  {F}oundations of {P}rogramming {S}emantics ({MFPS} {XXXIII})}, volume 336 of
  {\em Electron. Notes Theor. Comput. Sci.}, pages 281--297. Elsevier Sci. B.
  V., Amsterdam, 2018.
\newblock
  \href{https://coalg.org/mfps-calco2017/mfps-papers/6-simpson.pdf}{https://coalg.org/mfps-calco2017/mfps-papers/6-simpson.pdf}.

\bibitem{analyticmonads}
Stanis{\l}aw Szawiel and Marek Zawadowski.
\newblock Theories of analytic monads.
\newblock {\em Math. Structures Comput. Sci.}, 24(6):e240604, 33, 2014.
\newblock \href{https://arxiv.org/abs/1204.2703}{arXiv:1204.2703}.

\bibitem{trimble}
Todd Trimble.
\newblock On the bar construction, 2007.
\newblock $n$-Category Caf\'e blog post.
  \href{https://golem.ph.utexas.edu/category/2007/05/on\\_the_bar_construction.html}{https://golem.ph.utexas.edu/category/2007/05/on\_the\_bar\_construction.html}.

\bibitem{weakweber}
Mark Weber.
\newblock Generic morphisms, parametric representations and weakly cartesian
  monads.
\newblock {\em Theory and Applications of Categories}, 13(14):191–234, 2004.

\bibitem{weber_classifiers}
Mark Weber.
\newblock Internal algebra classifiers as codescent objects of crossed internal
  categories.
\newblock {\em Theory Appl. Categ.}, 30, 2015.
\newblock \href{https://arxiv.org/abs/1503.07585}{arXiv:1503.07585}.

\end{thebibliography}

\end{document}